\documentclass{conm-p-l}
\usepackage[utf8]{inputenc}
\usepackage{amsmath, amscd}
\usepackage{amssymb, bm} 
\usepackage{amsthm}
\usepackage{color}
\usepackage[dvipsnames]{xcolor}
\usepackage{cancel}
\usepackage{ulem}
\usepackage{geometry}\geometry{margin=1in}
\usepackage[all]{xy}
\usepackage{mathrsfs}
\usepackage{mathtools}
\usepackage{tikz}
\usepackage{multicol}
\usepackage{soul}
\usepackage{graphicx, psfrag, tikz, tikz-cd}

\usepackage{caption}
\usepackage{subcaption}

\usepackage{paralist}

\numberwithin{equation}{section}

\newtheorem{theorem}{Theorem}[section]
\newtheorem{lemma}[theorem]{Lemma}

\theoremstyle{definition}
\newtheorem{definition}[theorem]{Definition}
\newtheorem{example}[theorem]{Example}

\theoremstyle{remark}
\newtheorem{remark}[theorem]{Remark}

\numberwithin{equation}{section}

\theoremstyle{corollary}
\newtheorem{corollary}[theorem]{Corollary}

\DeclareMathOperator{\Sp}{Sp}

\DeclareMathOperator{\Hom}{Hom}

\newcommand{\bC}{\mathbb C}
\newcommand{\bD}{\mathbb D}

\newcommand{\bR}{\mathbb R}

\newcommand{\bZ}{\mathbb Z}


\newcommand{\bj}{\mathsf{j}}


\newcommand{\cD}{\mathcal D}

\newcommand{\cM}{\mathcal M}




\newcommand{\critv}{\mathrm{critv}}
\newcommand{\Hor}{\mathrm{Hor}}
\newcommand{\Ver}{\mathrm{Vert}}
\def\re{\mathrm{Re}}
\def\im{\mathrm{Im}}






\newcommand{\la}{\lambda} 
\newcommand{\dd}{\partial}

\newcommand{\lra}{\longrightarrow} 

\definecolor{InkGreen}{RGB}{0,128,0}
\definecolor{Cblue}{RGB}{0, 150, 255}

\usepackage[colorlinks=true, linkcolor=blue, filecolor=blue,citecolor=blue]{hyperref}

\title[On fiber and base decompositions in the Fukaya category of a symplectic LG model]{On fiber and base decompositions in the Fukaya category of a symplectic Landau-Ginzburg model}

\author{Haniya Azam}
\address{Haniya Azam: Department of Mathematics, Lahore University of Management Sciences (LUMS), DHA, Lahore 54792, Pakistan}
\email{haniya.azam@lums.edu.pk}

\author{Catherine Cannizzo}
\address{Catherine Cannizzo: Columbia University, Department of Mathematics, New York, NY 10027, USA}
\email{cc5441@columbia.edu}
\thanks{C. Cannizzo was partially supported by NSF DMS-2316538}

\author{Heather Lee}
\address{Heather Lee: Pasadena, CA, USA}
\email{heatherlee.math@gmail.com}

\author{Chiu-Chu Melissa Liu}
\address{Chiu-Chu Melissa Liu: Columbia University, Department of Mathematics, New York, NY 10027, USA}
\email{ccliu@math.columbia.edu}

\subjclass[2020]{Primary 53D37}

\date{}

\begin{document}

\maketitle

\begin{abstract} In mirror symmetry, symplectic Landau-Ginzburg models are mirror to a large class of examples, in particular to Fano varieties and hypersurfaces of many Calabi-Yau and Fano varieties. When studying their Fukaya categories on the A-model in homological mirror symmetry, one needs to calculate the weights of pseudo-holomorphic discs bounded by Lagrangian branes. While these calculations simplify for exact and Lefschetz fibrations, we generalize the machinery for computing these weights by dropping the exact and Lefschetz assumptions. For a general symplectic Landau-Ginzburg model, a singular symplectic fibration, we prove that the weights and Lagrangian gradings split into base and fiber components. This is used in many calculations of Fukaya-Seidel categories to provide evidence of Kontsevich's homological mirror symmetry conjecture.
\end{abstract}

\tableofcontents

\vspace{12pt}

\section{Introduction}

The study of Landau-Ginzburg models has largely been motivated by mirror symmetry, thus far.  Mirror  symmetry was originally discovered for pairs of  {closed} Calabi-Yau manifolds, and it has since been generalized to other settings as well. For the known non-Calabi-Yau examples, the  mirror is best described not by just a manifold, but a Landau-Ginzburg (LG) model \cite{HV00}.  It is then necessary to understand the Fukaya category of a symplectic Landau-Ginzburg model for studying homological mirror symmetry. They appear in many works, for example \cite{seidel, AAhypersurf, Ca, CV}, and \cite[Appendix]{ushape}. {Defining and computing the Fukaya category are two separate steps. A definition of a general Fukaya category with B-field for transversely intersecting Lagrangians is given in \cite{ACLLb}. A definition of the Fukaya category of a symplectic Landau-Ginzburg model with wrapping is given in \cite[\textsection 3]{AAhypersurf}. In this paper, we provide tools for computing the Fukaya category. LG models are particularly amenable to calculations in the Fukaya category.} We generalize calculations in Fukaya categories of LG models by dropping the exact and Lefschetz assumptions. We prove two theorems, one for disc weights and one for Lagrangian gradings,  that they split into base and fiber contributions. Before stating our main results, we introduce symplectic LG models and their Fukaya categories. 

\begin{definition}[Symplectic Landau-Ginzburg model $(Y,v)$]
Suppose $Y$ is a symplectic manifold with a symplectic form $\omega$ and a smooth map
    $$
    v:Y \to \bC
    $$ 
    such that the set of critical values $\critv(v)$ of $v$ is finite.  Denote by $C :=\bC\backslash \{\critv(v)\}$ the set of regular values of $v$. 
 We further assume that $Y_C:= v^{-1}(C) \to C$ is a symplectic fibration. {That is, it is a locally trivial fibration with symplectic generic fiber and structure group contained in the symplectomorphism group of the fiber.} In particular, for each $c\in C$, the fiber $Y_c=v^{-1}(c)$ is a symplectic submanifold with symplectic form $\omega_c=\omega|_{v^{-1}(c)}$.    Then $(Y,v)$ is called a \emph{symplectic Landau-Ginzburg model.}  
\end{definition}

\begin{remark}\label{rmk:convexity} One generally also imposes the condition that $\omega$ be convex at infinity in order to define Floer theory. This is to ensure that pseudo-holomorphic discs do not escape to infinity. The results in this paper may be applied under the assumption that a maximum principle holds in the {fibers of the} LG model of interest {because a maximum principle holds in the base as mentioned below Corollary \ref{cor:section_or_fiber}}. Our results do not pertain to holomorphic discs near infinity. 
\end{remark}

\begin{remark} The fibration $v$ here is not assumed to be exact, i.e. $[\omega]=0$.  We also do not assume it to be  Lefschetz, in which case the critical locus is isolated and nondegenerate, meaning that the neighborhood of each singular fiber is locally modeled on the neighborhood of the zero fiber of $\sum_{j=1}^n z_j^2: \bC^n \to \bC$. (When $n=2$, $z_1^2+z_2^2$ can be written as $xy$ via a change of variable.)  In the local model for a Lefschetz fibration, the smooth fibers are isomorphic to $T^*S^{n-1}$.  The zero section is a Lagrangian sphere in the fiber known as the vanishing cycle.  The singular fiber is obtained from the smooth fibers by collapsing the vanishing cycle to the critical point.  
\end{remark}

 Given a Lagrangian submanifold in a smooth fiber, we can obtain a Lagrangian submanifold in the total space via the symplectic parallel transport.   Following \cite[\S 6]{symp_intro}, given a symplectic fibration $Y_C\to C$, for each $p\in Y_C$, denote by 
\begin{equation}\label{eq:vertical def}
\Ver_p:=\ker dv_p=T_pY_{v(p)}
\end{equation} the tangent space to the fiber. It determines a symplectic \emph{horizontal distribution}, $\Hor=\left(\ker dv\right)^{\perp_\omega}$, given by the symplectic orthogonal to the tangent spaces of fibers, i.e.
\begin{equation}\label{eq:hor def}
    \Hor_p:=(T_pY_{v(p)})^{\perp_\omega}\subset T_pY.
\end{equation}
The surjective linear map $dv_p: T_p Y \to T_{v(p)}C $ restricts to a linear isomorphism $dv_p|_{\Hor_p}: \Hor_p \to T_{v(p)}C$. Given any $\xi\in T_{v(p)}C$, we define the \emph{horizontal lift} to be $\xi^\sharp_p := (dv_p|_{\Hor_p})^{-1}(\xi) \in \Hor_p$. A path $\beta: \bR \to Y_C$ in $Y_C$ is said to be \emph{horizontal} if $\dot{\beta}(t)\in \Hor_{\beta(t)}$ for all $t$. 
Given  any path $\gamma: \bR \to C$ in the base $C$ and any  $t_0, t_1 \in \bR$, 
the \emph{symplectic parallel transport} map \begin{equation}\label{eq:par_transp}
\Phi_{\gamma(t_0)\xrightarrow[]{\gamma} \gamma(t_1)}: Y_{\gamma(t_0)}\to Y_{\gamma(t_1)}
\end{equation}
is defined as follows. Given any $p\in Y_{\gamma(t_0)}$, there is a unique horizontal curve $\gamma^\sharp:\bR\to Y_C$ such that $v\circ \gamma^\sharp = \gamma: \bR\to C$ and  $\gamma^\sharp(t_0)=p$. 
Define $\Phi_{\gamma(t_0)\to \gamma(t_1)}(p):= \gamma^\sharp(t_1) \in Y_{\gamma(t_1)}$. 
It is known that $\Phi_{\gamma(t_0)\to \gamma(t_1)}$ is a symplectomorphism from $(Y_{\gamma(t_0)}, \omega_{\gamma(t_0)})$
to $(Y_{\gamma(t_1)}, \omega_{\gamma(t_1)})$. This allows us to construct fibered Lagrangians in $Y$ defined below. {When the path $\gamma$ is already indicated in the starting and ending points, we may drop it and write $\Phi_{\gamma(t_0)\to \gamma(t_1)}$.}

{\bf Objects.} In the aforementioned references \cite{seidel, AAhypersurf, Ca, CV}, the objects considered in the Fukaya category are Lagrangians fibered over and parallel along curves which do not bound discs, \cite[Equation (3.37)]{AAhypersurf}.

\begin{definition}[Fibered Lagrangians] \label{def: fibered Lagrangian}
Let $\gamma: \bR\to C$ be a smooth embedded curve in $C $.
A \emph{Lagrangian in $Y$ fibered over $\gamma$} is a Lagrangian submanifold $L$ of $(Y,\omega)$ such that 
\begin{enumerate}
    \item $v(L)= \gamma(\bR)$, 
    \item  $v|_L: L\to \gamma(\bR)$ is a locally trivial fibration, and
    \item  $L_c := L\cap Y_c = (v|_L)^{-1}(c)$ is a Lagrangian submanifold
    of $(Y_c, \omega_c)$ for any $c\in \gamma(\bR)$.
    \end{enumerate} 
A Lagrangian $L\subset Y_C\subset Y$ 
fibered over $\gamma:\bR\to C\subset \bC$  is \emph{parallel along $\gamma$} if for any $t \in \bR$, 
\[
L_{\gamma(t)} =\Phi_{\gamma(0)\xrightarrow[]{\gamma}\gamma(t)}(L_{\gamma(0)}), 
\]
or equivalently, for any $t_1, t_2\in \bR$, 
\[
L_{\gamma(t_2)}= \Phi_{\gamma(t_1)\xrightarrow[]{\gamma} \gamma(t_2)}(L_{\gamma(t_1)}).
\]
We say a Lagrangian is \underline{fibered} (and \underline{parallel}) if it is fibered over (and parallel along)  $\gamma$. When working with multiple fibered Lagrangians, we denote
$$
L_{j,c}:=(L_j)_c.
$$
\end{definition}

Morphisms in the Fukaya category are defined by Floer theory for transversely intersecting Lagrangians \cite[\textsection 3]{ACLLb}. To define them for LG models, we need some control on the base curves. In Remark \ref{rmk:admissible} and the paragraph that follows, we motivate the choice of $\gamma$. Then we state Lemma \ref{lem:isotopy_intro} relating fibered Lagrangians for different choices of $\gamma$. In Remark \ref{rmk:admissible flow}, we describe the procedure to move fibered Lagrangians so they intersect transversely in the base.

\begin{remark}\label{rmk:admissible} In order to define Floer theory for symplectic LG models, the Lagrangian objects need to satisfy some additional admissibility conditions with respect to $v$, outside of a compact set, so we can have a good control over the holomorphic discs.  The notion of admissibility was originally introduced by Kontsevich. There are many models of admissibility, and for the examples in this paper, we will adapt the notion that an \emph{admissible Lagrangian} $L\subset Y$ is one whose image $v(L)$ outside of a compact set in $\bC$ is a union of radial rays in the complement of the negative real axis (this model was used in for example \cite[{Definition 3.5}]{AAhypersurf}). The results in this paper work for any models of admissibility, so long as the Lagrangians are fibered, since as mentioned in Remark \ref{rmk:convexity}, our results do not pertain to holomorphic discs near infinity.
\end{remark}
{If $Y$ is Liouville}, a {Lagrangian fibered over and parallel along
a curve} is {an example of} a thimble-like Lagrangian in \cite[Definition 8.19]{GPS22} as a {variant of the notion of thimbles} in Lefschetz fibrations. When $v$ is Lefschetz, a thimble $L$ is a {Lagrangian} obtained by parallel transport of a vanishing cycle in a smooth fiber, over a curve $\gamma$ in the base with one end at a critical value.  In this case, the vanishing cycle converges to the critical point under parallel transport, and $L$ is a smooth Lagrangian, in particular it is still smooth at the critical point. If the singular fiber has a more complicated kind of singularity (e.g. locally modeled by ${z_1 z_2 z_3}: \bC^3\to \bC$), then $\gamma$ should avoid the critical value for $L$ to be smooth; see Example~\ref{ex:Harvey-Lawson} below.  In this case, one can instead construct Lagrangians that are fibered over U-shaped curves in the base \cite{AAhypersurf}.  Outside of a compact set in $\bC$ containing $\critv(v)$, each U-shaped curve consists of two radial rays; see Figure \ref{fig: ushape_paper5}.

\begin{example}\label{ex:Harvey-Lawson}
Let $Y=\bC^3$ be equipped with the standard symplectic and complex structures, and define $v:\bC^3\to \bC$ by $v(z_1,z_2,z_3)=z_1z_2z_3$. Then $(\bC^3, v)$ is a symplectic Landau-Ginzburg model, $0$ is the only critical value of $v$, and $\mathrm{crit}(v)\subset \bC^3$ is the union of the three coordinate axes. For any $a\in [0,\infty)$, define
$$
L_a:= \{(\sqrt{a+|u|^2}e^{i \theta}, u, \bar{u}e^{-i \theta}): u\in \bC, e^{i \theta}\in S^1\} \subset \bC^3.
$$
Observe that $v(\sqrt{a+|u|^2}e^{i\theta}, u, \bar{u} e^{-i\theta})= \sqrt{a+|u|^2}|u|^2$ which is a non-negative real number independent of $\theta$, and that $v(L_a)=[0,\infty)$. For any $a\in [0,\infty)$ and any $c\in (0,\infty)$, $L_{a,c} = L_a \cap v^{-1}(c)$ is a Lagrangian submanifold of $Y_c=v^{-1}(c)\cong (\bC^*)^2$, and 
$L_{a,c}$ is diffeomorphic to $T^2=S^1\times S^1$. 
\begin{itemize}
    \item When $a>0$, $L_a$ is a smooth Lagrangian submanifold of $\bC^3$, $L_{a,0}=L_a\cap v^{-1}(0)$ is a circle of radius $\sqrt{a}$ in the $z_1$-axis, and $L_a$ is diffeomorphic to $\bR^2\times S^1$
    which is homeomorphic to $C(S^1)\times S^1$, where $C(S^1)$ denotes a cone over $S^1$.
    \item When $a=0$, $L_{0,0}=L_0\cap v^{-1}(0)$ consists of a point $(0,0,0)$, which is the only singular point in $L_0$, and $L_0$ is homeomorphic to $C(S^1\times S^1)=C(T^2)$.  
\end{itemize}
\end{example}

The Lagrangians $L_a\subset \bC^3$ in Example~\ref{ex:Harvey-Lawson} are special cases of 
Harvey-Lawson special Lagrangians in $\bC^n$  \cite[III.3.A]{Harvey-Lawson}
and Aganagic-Vafa Lagrangians
in symplectic toric Calabi-Yau threefolds \cite{Aganagic-Vafa}.

\begin{definition}[Fibered Lagrangian isotopies]\label{def: fibered Lagrangian isotopies}
Let 
$$
h:\bR \times [0,1]\to C\subset \bC, \quad (t,s)\mapsto h(t,s)=:\gamma_s(t)
$$
be an isotopy of embedded curves in $C$, which is a Lagrangian 
isotopy in $C\subset \bC$. A {\em Lagrangian isotopy fibered over $h$} is a
Lagrangian isotopy 
$$
\psi: L\times [0,1]\to Y, \quad (p,s)\mapsto \psi(p,s) =: \psi_s(p)
$$
such that for any $s\in [0,1]$,  $\psi_s(L)$ is a Lagrangian fibered over the embedded curve 
$\gamma_s:\bR\to C\subset \bC$. We call such an isotopy a \emph{fibered Lagrangian isotopy}.
\end{definition}

Let $\gamma:\bR\to C\subset \bC$ be an embedded curve
in $C$.  Given any $t_0\in \bR$ and any Lagrangian submanifold 
$\ell \subset Y_{\gamma(t_0)}$ there exists a unique Lagrangian $L \subset Y_C \subset Y$
fibered over and parallel along $\gamma$ such that
$L_{\gamma(t_0)} = \ell$. Let
\[
h: \bR\times [0,1]\to C \subset \bC, \qquad
(t,s)\mapsto h(t,s)  =: \gamma_s(t)
\]
be a Lagrangian isotopy in $C\subset \bC$ such that $\gamma_0=\gamma$
and $h(t_0,s)= h(t_0,0) =\gamma(t_0)$ for any $s\in [0,1]$. Then  
there is a unique Lagrangian isotopy 
\[
\psi: L\times [0,1]\to Y_C\subset Y, \qquad (p,s)\mapsto \psi(p,s)= \psi_s(p)
\]
fibered over $h$ satisfying the following conditions. 
\begin{enumerate} 
\item $\psi_0(p)=p$ for any $p\in L$. 
\item $\psi_s(p)=p$ for any $p\in \ell\subset Y_{\gamma(t_0)}$ and any $s\in [0,1]$. 
\item For any $s\in [0,1]$, $\psi_s(L)$ is a Lagrangian fibered over and 
parallel along the embedded curve $\gamma_s:\bR\to C$. 
\end{enumerate}
We observe that, (2) and (3) imply (1). 

More explicitly, 
\begin{equation}\label{eq:parallel isotopy}
\psi(p,s) = 
\Phi_{\gamma(t_0)=\gamma_s(t_0) \xrightarrow[]{\gamma_s}\gamma_s(t)}\Phi_{\gamma(t)\xrightarrow[]{\gamma} \gamma(t_0)}(p).
\end{equation}
\begin{remark}
    We do not assume the symplectic connection is flat. Therefore, the choice of path between two points we parallel transport over matters. In order to move between a collection of parallel fibered Lagrangians in an isotopy in a LG model, we therefore must move back to the starting point at $\gamma(t_0)$ in order to move to a different Lagrangian in the collection.
\end{remark}

One result we prove in \textsection \ref{sec:area_triv} is that even though $[\omega]\neq 0$, that is $\omega$ is not exact, the Lagrangian isotopies one considers in an LG model are exact. See Lemma \ref{lem:Lag_isot_area} for the proof. The reason, informally, is because the Lagrangian isotopy fixes a  Lagrangian in a reference fiber over $\gamma(t_0)$, and because fibered Lagrangians are fibered over contractible paths in the base. Therefore, $L \to v(L)$ is a trivializable fibration. For parallel fibered Lagrangians, parallel transport $\ell \times \mathbb{R} \ni (p,t) \mapsto \Phi_{\gamma(t_0) \xrightarrow[]{\gamma} \gamma(t)}(p) \in L_{\gamma(t)}\subset L$ provides a trivialization.
\begin{lemma} \label{lem:isotopy_intro}
Let $L$ be a Lagrangian in $Y$ fibered over an embedded curve $\gamma: \bR\to C\subset \bC$. 
Let 
\[
h: \bR\times [0,1] \to C \subset \bC, \qquad
(t,s)\mapsto h(t,s) = \gamma_s(t)
\]
be an isotopy in $C$ such that $\gamma_0=\gamma$.
Let 
\[
\psi: L\times [0,1]\to Y_C\subset Y, \qquad
(p,s)\mapsto \psi(p,s)=\psi_s(p)
\]
be a Lagrangian isotopy in $Y_C$ fibered over $h$ such that $\psi_0(p)=p$ for all $p\in L$.

If there exists $t_0\in \bR$
such that for any $s\in [0,1]$,
$h(t_0,s)= \gamma(t_0)$ and $\psi_s(p)=p$ for any $p\in L_{\gamma(t_0)}$, then $\psi$ is an exact Lagrangian isotopy.
\end{lemma}

\begin{remark}The reason for all Lagrangians to pass through the same point in the base is to have a reference fiber to apply a cup functor, in applications. One reason to  choose that point to be $\gamma(t_0)= -\epsilon$ is that, when $Y$ is equipped with a holomorphic volume form $\Omega$ and $v:Y\to \bC$ is holomorphic, a necessary condition for the convergence of the oscillatory integral $\int_L e^{-v}\Omega$ along a Lagrangian $L\subset Y$
is that $\Re v(z) \to +\infty$ as $|z|\to +\infty$ for $z\in L$.  
\end{remark}

{\bf Morphisms.} To define a morphism between any two Lagrangian objects $L_0$ and $L_1$ in the Fukaya category of a symplectic Landau-Ginzburg model, we need to first position them properly using Hamiltonian perturbations as briefly explained in Remark \ref{rmk:admissible flow}.  For the notation of this paper, we assume that $L_0$ and $L_1$ are already properly positioned, then we can define morphisms  by the usual Floer cochain complexes $CF(L_0, L_1)$ generated by intersection points of transversely intersecting Lagrangians \cite[\textsection 3]{ACLLb}. Similarly, assuming that $L_0, \ldots, L_k$ are already in correct position, we have the $A_\infty$-product  $\mu^k:CF(L_{k-1}, L_k)\otimes \cdots \otimes CF(L_0,  L_1)\to CF(L_0, L_k)$, \cite[Definition 5.13]{ACLLb}.

{Morphisms in a Fukaya category of a symplectic LG model are defined by localization at continuation maps, \cite[Equation (3.43)]{AAhypersurf}. Continuation maps count discs bounded by Lagrangian isotopies. Lagrangian isotopies in a symplectic LG model fiber over isotopies of curves in the base $\mathbb{C}$. By \cite[Corollary 3.24]{AAhypersurf}, morphisms are computed by a colimit over linear wrappings clockwise on the projection of the output Lagrangian (or equivalently by linear wrappings counterclockwise on the projection of the input Lagrangian). By \cite[Proposition 6.1.9]{OT24} applied to the base $\mathbb{C}$, one can equivalently perturb by a single quadratic wrapping counterclockwise on the projection of the input Lagrangian. We describe this next.}

\begin{remark}\label{rmk:admissible flow} Given any two admissible fibered Lagrangians $L_0$ and $L_1$, the Floer cochain complex $CF(L_0, L_1)$ might not be well-defined if they do not intersect transversely, or even when they do, $HF(L_0, L_1)$ might not be invariant under Hamiltonian isotopy because of the noncompact ends. Therefore, the morphism between them in the Fukaya category of $(Y,v)$ is defined to be  $CF(\varphi_{\theta_0}(L_0), L_1)$, where $\varphi_{\theta_0}$ is the flow of a Hamiltonian isotopy that preserves the admissibility of the Lagrangians.  If we use the model where  admissible Lagrangians project to radial rays outside of a compact set as introduced in Remark \ref{rmk:admissible}, then we can choose $\varphi_{\theta_0}$ to be a Hamiltonian perturbation that rotates  the ends of $v(L_0)$ to angles greater than that of $v(L_1)$, while still avoid passing the negative real axis as in Figure \ref{fig: ushape_paper5}.  Similarly, the product maps $\mu^k:CF(\varphi_{\theta_{k-1}}(L_{k-1}),L_k)\otimes\cdots \otimes CF(\varphi_{\theta_0}(L_0), \varphi_{\theta_1}(L_1))$ are defined using the Hamiltonian perturbed Lagrangians so the ends of $v(L_j)$ get rotated counterclockwise to angles greater than that of $v(L_{j+1})$; see Figure \ref{fig: ushape_paper5} and the leftmost picture in  Figure \ref{fig: prism_complex_paper5} for  illustrations of $\mu^1$ and $\mu^2$, respectively. 
\end{remark}

\begin{figure}[h]
\centering
\fontsize{15pt}{17pt}\selectfont
\scalebox{0.65}{
\begingroup%
  \makeatletter%
  \providecommand\color[2][]{%
    \errmessage{(Inkscape) Color is used for the text in Inkscape, but the package 'color.sty' is not loaded}%
    \renewcommand\color[2][]{}%
  }%
  \providecommand\transparent[1]{%
    \errmessage{(Inkscape) Transparency is used (non-zero) for the text in Inkscape, but the package 'transparent.sty' is not loaded}%
    \renewcommand\transparent[1]{}%
  }%
  \providecommand\rotatebox[2]{#2}%
  \newcommand*\fsize{\dimexpr\f@size pt\relax}%
  \newcommand*\lineheight[1]{\fontsize{\fsize}{#1\fsize}\selectfont}%
  \ifx\svgwidth\undefined%
    \setlength{\unitlength}{361.84617891bp}%
    \ifx\svgscale\undefined%
      \relax%
    \else%
      \setlength{\unitlength}{\unitlength * \real{\svgscale}}%
    \fi%
  \else%
    \setlength{\unitlength}{\svgwidth}%
  \fi%
  \global\let\svgwidth\undefined%
  \global\let\svgscale\undefined%
  \makeatother%
  \begin{picture}(1,0.48765425)%
    \lineheight{1}%
    \setlength\tabcolsep{0pt}%
    \put(0,0){\includegraphics[width=\unitlength,page=1]{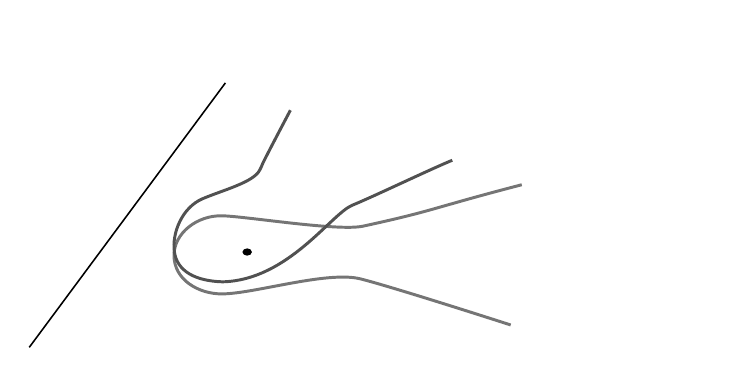}}%
    \put(0.56786505,0.1800561){\makebox(0,0)[lt]{\lineheight{1.25}\smash{\begin{tabular}[t]{l}\textcolor{black}{$v(L_1)$}\end{tabular}}}}%
    \put(0.48858906,0.2811285){\makebox(0,0)[lt]{\lineheight{1.25}\smash{\begin{tabular}[t]{l}\textcolor{black}{$v(L_0)$}\end{tabular}}}}%
    \put(0,0){\includegraphics[width=\unitlength,page=2]{ushapeBase_paper5_grayscale.pdf}}%
    \put(0.17856632,0.14736676){\color[rgb]{0,0,0}\makebox(0,0)[lt]{\lineheight{1.25}\smash{\begin{tabular}[t]{l}$c_+$\end{tabular}}}}%
    \put(0.42492505,0.1531954){\color[rgb]{0,0,0}\makebox(0,0)[lt]{\lineheight{1.25}\smash{\begin{tabular}[t]{l}$c_-$\end{tabular}}}}%
  \end{picture}%
\endgroup%
}
\caption{An example of two U-shaped curves in the base of $v:Y\to \bC$ (with the black dot being a critical value), over which  fibered Lagrangians $L_0$ and $L_1$ are defined.  The radial ends of the base curve $v(L_0)$ are positioned counterclockwise away from those of $v(L_1)$. The intersections between $L_0$ and $L_1$ are in the fibers $Y_{c_+}$ and $Y_{c_-}$ above the points $c_+, c_- \in v(L_0)\cap v(L_1)$.  The shaded gray area is the image under $v$ of a $J$-holomorphic bigon contributing to $\mu^1$.}
\label{fig: ushape_paper5}
\end{figure}

{\bf Structure maps.} To compute $\mu^k$, one useful strategy is to deform a Lagrangian object  to one for which $\mu^k$ is easier to compute. In \cite{ACLLb}, we studied how $\mu^k$ changes as we perform this deformation by using an arbitrary Lagrangian isotopy. More specifically, we studied the change in the symplectic area, for example,  of a holomorphic disc contributing to the count of $\mu^k$ with boundaries on the Lagrangians involved. {As a consequence of Lemma \ref{lem:isotopy_intro}}, we have applications such as showing that the area of a holomorphic triangle (away from the singular fibers) contributing to $\mu^2$ splits into vertical and horizontal contributions. See Theorem \ref{cor:triangle disc area} for the precise statement. 

\begin{theorem}[Disc areas in $\mu^2$ decompose into vertical and horizontal contributions]
   Let $u$ be a pseudo-holomorphic triangle counted in $\mu^2:CF(L_{1}, L_2)\otimes CF(L_0,  L_1)\to CF(L_0, L_2)$ for fibered Lagrangians $L_0,L_1,L_2$ in a symplectic Landau-Ginzburg model. Then its area splits into a sum of fiber and base contributions.
\end{theorem}

{\bf Gradings on morphisms.} In Section \ref{sec:Maslov_fibrn}, under the additional assumptions that $Y$ is K\"{a}hler, $v: Y\to \bC$ is  holomorphic, and that there is a nowhere vanishing holomorphic volume form on $Y$, we discuss $\bZ$-gradings on the Floer cochain complexes of the form $CF(L_0, L_1)$ given two fibered Lagrangians $L_0$ and $L_1$. Following \cite{seidel_lagr}, we equip each $L_j$ with a grading structure, which is a choice of a lift $\widetilde \alpha_{L_j}:L_j\to \bR$ of the squared phase function $\alpha_{L_j}: L_j\to U(1)$.  Such a grading exists if and only if $\alpha_{L_j}$ is homotopic to a constant map. The graded Lagrangians $\widetilde L_j=(L_j, \widetilde \alpha_{L_j})$, $j=0,1$, then determine a canonical $\bZ$-grading on $CF(L_0, L_1)$.  

In our case of fibered Lagrangians in a symplectic Landau-Ginzburg model, we show that each $\widetilde \alpha_L$ can be written as a sum of a choice of grading $\widetilde \alpha_{v(L)}$ on the base curve $v(L)\subset \bC$ and a choice of relative grading $\widetilde \alpha_L^{\Ver}: L\to \bR$  (which is defined using the vertical subbundle of the tangent bundle, and it is uniquely determined given a choice of grading on a Lagrangian $L_c=L\cap Y_c\subset Y_c$ in a fiber).  In turn, we show that they determine a $\bZ$-grading on the Floer complexes given by a sum of base and fiber contributions. See Theorem \ref{cor_Maslov_paper5} for the precise statement.

\begin{theorem}[{Gradings for Floer complexes decompose into vertical and horizontal contributions}]
    Consider two fibered Lagrangians $L_0, L_1 \subset Y_C$ and $p\in L_{0,c} \pitchfork L_{1, c}$, so $c=v(p)$. Assume the fiber Lagrangians $L_{j,c}$ admit gradings for $j=0,1$. Then $L_0, L_1$ admit gradings $\widetilde{L}_0$, $\widetilde{L}_1$ such that
    $$
    \deg(\widetilde{L_0}, \widetilde{L_1};p)=\deg_{Y_c}(\widetilde{L}_{0,c}, \widetilde{L}_{1,c}; p)+\deg_{\bC}(\widetilde{v(L_0)},\widetilde{v(L_1)}; c).
    $$
\end{theorem}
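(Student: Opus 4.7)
The strategy is to exploit the holomorphicity of $v$ and the fibered structure of $L_0,L_1$ in order to write the squared phase of each Lagrangian as a product of a base phase and a fiber phase, so that the graded lifts split additively, and then to apply the additivity of Seidel's degree formula under symplectic direct sums to recover the claimed decomposition of degrees.

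First I would construct a canonical holomorphic volume form $\Omega_c$ on each smooth fiber. Since $v$ is a holomorphic submersion on $Y_C$, locally there is a holomorphic vector field $X$ on $Y$ with $dv(X)=\partial_z$; set $\Omega_c := (\iota_X\Omega)|_{Y_c}$. This is independent of the choice of $X$, because for any vertical $V$ one has $\iota_V\Omega|_{Y_c}=0$: together with $n-1$ vertical vectors, $V$ would contribute $n$ vectors in the complex $(n-1)$-dimensional fiber, so they are $\bC$-linearly dependent and $\Omega$, being a top holomorphic form, vanishes on them. Hence $\Omega_c$ is a globally well-defined, nowhere-vanishing holomorphic volume form on $Y_c$, and it is the natural form for computing $\alpha_{L_c}$.

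Second, at $p\in L$ with $c:=v(p)$, pick a frame $e_0,e_1,\dots,e_{n-1}$ of $T_pL$ with $e_0$ the horizontal lift of the unit tangent to $v(L)$ at $c$ and $e_1,\dots,e_{n-1}$ spanning $T_pL_c$. Writing $\Omega=v^*(dz)\wedge\eta$ locally with $\eta|_{Y_c}=\Omega_c$, and using $dv(e_i)=0$ for $i\geq 1$, one gets
$$
\Omega(e_0,\dots,e_{n-1}) = dz(dv(e_0))\cdot \Omega_c(e_1,\dots,e_{n-1}).
$$
Squaring and normalizing yields $\alpha_L(p)=(\alpha_{v(L)}\circ v)(p)\cdot \beta(p)$, where $\beta(p):=\alpha_{L_{v(p)}}(p)$ is the fiberwise phase. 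Because $L$ deformation retracts onto $L_c$ via parallel transport, $\beta$ is homotopic on $L$ to $\alpha_{L_c}$, which lifts to $\bR$ by hypothesis; hence $\beta$ admits a lift $\widetilde\alpha_L^{\Ver}: L\to\bR$ extending the given fiber grading. Similarly $\alpha_{v(L)}$ lifts on the contractible curve $v(L)$. The sum $\widetilde\alpha_L:=\widetilde\alpha_L^{\Ver}+\widetilde\alpha_{v(L)}\circ v$ is then a graded lift of $\alpha_L$; apply this to both $L_0$ and $L_1$.

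Finally, at a transverse intersection $p\in L_0\cap L_1$ with $c=v(p)$, the decompositions $T_pL_j=T_pL_{j,c}\oplus\langle e_0^{(j)}\rangle$ are compatible with the symplectic direct sum $T_pY=\Ver_p\oplus\Hor_p$; in particular, transversality of $L_0,L_1$ at $p$ is equivalent to the simultaneous transversality of $L_{0,c},L_{1,c}$ in $Y_c$ at $p$ and of $v(L_0),v(L_1)$ in $\bC$ at $c$. The canonical Lagrangian path used in Seidel's degree formula then splits into a vertical path (in $\Ver_p$) and a horizontal path (in $\Hor_p$), the Maslov indices add, and the graded lifts split additively as in the previous step. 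Combining these gives
$$
\deg(\widetilde{L}_0,\widetilde{L}_1;p) = \deg_{Y_c}(\widetilde{L}_{0,c},\widetilde{L}_{1,c};p) + \deg_{\bC}(\widetilde{v(L_0)},\widetilde{v(L_1)};c).
$$
The main technical obstacle I anticipate is in the middle step: verifying that the fiberwise phase function $\beta$, viewed as a map on the whole of $L$, admits a \emph{global} continuous $\bR$-lift compatible with the prescribed grading on $L_c$ (rather than acquiring monodromy under parallel transport across the base curve). Once this compatibility is in hand, the remaining additivity statements are formal consequences of Seidel's framework.
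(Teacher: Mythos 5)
Your proposal is correct and follows essentially the same route as the paper: you construct the fiber volume form as the Poincar\'{e} residue of $\Omega$ along $Y_c$ (your $\iota_X\Omega$ is the same object as the paper's local-coordinate residue), derive the key factorization $\Omega(e_0,\dots) = dv(e_0)\cdot\Omega_c(e_1,\dots)$ to split the squared phase, lift the gradings using the deformation retract $L_c\hookrightarrow L$, and split the canonical short path into vertical and horizontal components so the degrees add. The monodromy concern you flag at the end is resolved exactly by that deformation retract (the base curve is an embedded copy of $\bR$, so $L\cong L_c\times\bR$), which is how the paper handles it.
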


Following these results regarding gradings on Lagrangians and their intersection points, we state and prove a result on the index of a pseudo-holomorphic bigon in an LG model, in Lemma \ref{bigon_Maslov_paper5}. Note that in literature on Fukaya categories the \emph{Maslov index} is defined in three contexts: for Lagrangians, for their intersection points, and for pseudo-holomorphic discs.

\subsection*{Acknowledgement} We thank Denis Auroux and Peter Samuelson for helpful conversations on gradings. C.-C.-M. Liu wishes to thank Marco Aldi, Allison Moore, and Nicola Tarasca, the organizers of the Richmond Geometry Meeting 2024, for the invitation to speak at the conference, and for the invitation to submit an article to its proceedings. We thank the anonymous referees for their very helpful comments.

\section{Areas of discs away from the singular fiber}\label{sec:area_triv}

In this section we provide a tool for computing $\mu^2$, the composition map in the Fukaya category. Let $(Y,\omega,v)$ be a symplectic Landau-Ginzburg model. All calculations here are assumed to happen over a {relatively compact} open subset of the base of $v|_{Y_C}:Y_C \to C$ trivializable by a diffeomorphism. {See the first image in Figure \ref{fig: prism_complex_paper5} for a depiction of a curve $u$ counted in $\mu^2$ in an example where Lagrangians are fibered over U-shaped curves. The subsequent images show a Lagrangian isotopy in the base. We may consider a subset of $v(\psi_1(L_0))$ depicted, so that the path in the base is still an embedding.}

      A \emph{Lagrangian isotopy} is a homotopy $\psi:L \times [0,1] \to Y$ such that $\psi_s(L):=\psi(L,s)$ is Lagrangian and a smooth embedding for each $s \in [0,1]$. When $\psi$ is a Lagrangian isotopy, we have  $\psi^*\omega = b \wedge ds$, where $b$ is {a real 1-form} on $L\times [0,1]$ that keeps track of the change in the symplectic form under the isotopy. More explicitly, in  local coordinates $(x_1,\ldots, x_n)$ on $L$
      and the global coordinate $s$ on $[0,1]$, we have
\begin{equation}
\psi^*\omega =\sum_{i<j} a_{ij}(x,s) dx_i \wedge dx_j+ \sum_i b_i(x,s) dx_i \wedge ds
\end{equation}
and 
\begin{equation}
\psi_s^*\omega = \sum_{i<j} a_{ij}(x,s) dx_i\wedge dx_j =0
\end{equation}
since $\psi_s(L)$ is a Lagrangian. Note that
$b\wedge ds = (b + f ds)\wedge ds$ for any smooth function 
$f(x,s)$ on $L\times [0,1]$. We may choose 
\begin{equation} \label{eqn:b_Lag_isot}
b =\sum_i b_i(x,s) dx_i = 
-\iota_{\frac{\partial}{\partial s}}\psi^*\omega.
\end{equation}
For each fixed $s$, $b_s$ is a closed 1-form on $L$.  A Lagrangian isotopy is \emph{exact} if  $b\wedge ds=d(Hds)$ for some smooth function $H: L\times [0,1]\to \bR$, in which case each $b_s=dH_s$ is an exact 1-form on $L$, and
$$
dH = -\iota_{\frac{\partial}{\partial s}}\psi^*\omega  + \frac{\partial H}{\partial s}ds. 
$$
See \cite{Seidel_isotopy}  and \cite[Section 6]{ACLLb}  for more details.  

{We do not assume the LG model is exact. However, a Lagrangian isotopy of a fibered Lagrangian $L$ is exact if it fixes both a point $\gamma(t_0)$ in the base curve $v(L)$ and the  Lagrangian in the fiber over $\gamma(t_0)$. We do not assume the fibered Lagrangians are parallel for this result.}

\begin{lemma} \label{lem:Lag_isot_area}
Let $L$ be a Lagrangian in $Y$ fibered over an embedded curve $\gamma: \bR\to C\subset \bC$. 
Let 
\[
h: \bR\times [0,1] \to C \subset \bC, \qquad
(t,s)\mapsto h(t,s) = \gamma_s(t)
\]
be an isotopy in $C$ such that $\gamma_0=\gamma$.
Let 
\[
\psi: L\times [0,1]\to Y_C\subset Y, \qquad
(p,s)\mapsto \psi(p,s)=\psi_s(p)
\]
be a Lagrangian isotopy in $Y_C$ fibered over $h$ such that $\psi_0(p)=p$ for all $p\in L$.

If there exists $t_0\in \bR$
such that for any $s\in [0,1]$,
$h(t_0,s)= \gamma(t_0)$ and $\psi_s(p)=p$ for any $p\in L_{\gamma(t_0)}$, then $\psi$ is an exact Lagrangian isotopy.
\end{lemma}
\begin{proof} 
We have $\psi^*\omega = b\wedge ds$, where
$b = -\iota_{\frac{\partial}{\partial s}}\psi^*\omega$
by Equation \eqref{eqn:b_Lag_isot}. 
Let  
$$
\iota_L: L_{\gamma(t_0)} \to L,\quad
\iota_Y: L_{\gamma(t_0)} \to Y_{\gamma(t_0)} \to Y
$$
be inclusion maps, and let $\pi: L_{\gamma(t_0)}\times [0,1]\to L_{\gamma(t_0)}$ be the projection to the first factor. 
By assumption, for any $(p,s)\in L_{\gamma(t_0)}\times [0,1]$, 
$$
\psi \circ (\iota_L \times id_{[0,1]})(p,s)
=\psi(p,s)= p =\iota_Y(p) = \iota_Y\circ \pi(p,s), 
$$
so $\psi\circ(\iota_L \times id_{[0,1]}) = \iota_Y\circ \pi: L_{\gamma(t_0)}\times [0,1] \to Y$. Thus
$$
(\iota_L\times id_{[0,1]})^*\psi^*\omega
= \left(\psi\circ (\iota_L\times id_{[0,1]})\right)^*\omega
=(\iota_Y \circ \pi)^*\omega
=\pi^* \iota_Y^*\omega =0
$$
since $\iota_Y^*\omega=0$. Therefore, 
$$
(\iota_L\times id_{[0,1]})^*b = 
- (\iota_L\times id_{[0,1]})^*(\iota_{\frac{\partial}{\partial s }}\psi^*\omega)
= -\iota_{\frac{\partial}{\partial s}}
(\iota_L\times id_{[0,1]})^*\psi^*\omega =0
$$
where the first equality follows from Equation \eqref{eqn:b_Lag_isot},
and the second equality holds because 
\[
d(\iota_L \times id_{[0,1]})_{(p,s)}\left(\frac{\partial}{\partial s}\right)
=\frac{\partial}{\partial s}
\]
for any $(p,s)\in L_{\gamma(t_0)}\times [0,1]$. We conclude that, for every $s\in [0,1]$, 
$\iota_L^*b_s=0$. 
The inclusion map $\iota_L: L_{\gamma(t_0)}\to  L$ is a homotopy equivalence, so $[b_s] =0 \in H^1(L)$.
\end{proof}

\begin{figure}[!ht]
    \centering
\resizebox{1.1\linewidth}{!}{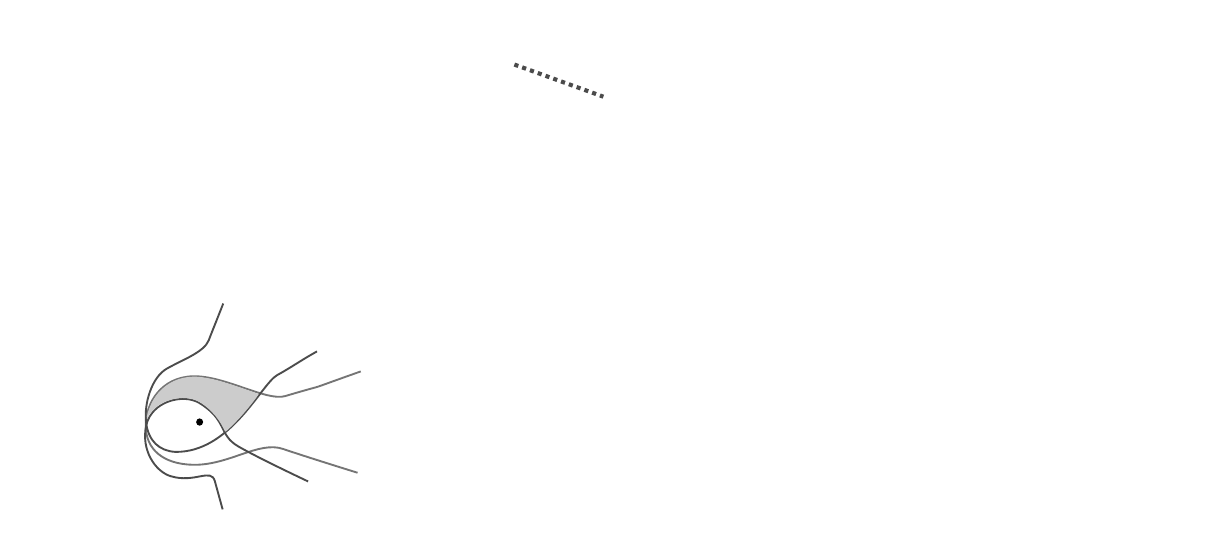}
    \caption{The three figures in the bottom illustrate the base of $(Y,v)$. The shaded triangle in the leftmost figure is the image under $v$ of a $J$-holomorphic triangle $u$ contributing to $\mu^2$. The Lagrangian isotopy $\psi$ deforms $L_0$ by {moving $v(L_0)$ to the left}. Theorem \ref{cor:triangle disc area} relates the symplectic area of $u$ to that of $u''$, which is a $J$-holomorphic triangle contained in $Y_{-\epsilon}$ with boundary on {three Lagrangians in the fiber}. The pyramid drawn at the top is a schematic illustration of the discs involved in the total space. The shaded triangle on the right {downward} face of the pyramid depicts {the image of} $u$. As we deform $L_0$ by the Lagrangian isotopy, $u$ deforms to $u'$, {whose image represents the union of $[u]$ and the top shaded quadrilateral in the pyramid}. The disc $u'$ is not $J$-holomorphic. The shaded triangle on the left side of the pyramid depicts a $J$-holomorphic disc $u''$ that is {relatively homologous to} $u'$.}
    \label{fig: prism_complex_paper5}
\end{figure}

Let $\cD(Y, (L_0,\ldots, L_k), (p_0,\ldots,p_k))$ be the set of continuous maps
\begin{equation}\label{eq: map u in D}
u: \Big(\bD, \partial \bD =\bigcup_{j=0}^k \partial_j\bD, \{ z_0,\ldots, z_k\} \Big) \lra \Big(Y, \bigcup_{j=0}^k L_j , \{ p_0,\ldots,p_k\} \Big)  
\end{equation}
from the unit disc $\bD$ to $Y$, where the images of the boundaries $u(\partial_j\bD)\subset L_j$, with corners at $u(z_j)=p_j$; see \cite[Definition 5.1]{ACLLb} for a more detailed definition.  For $J$ an almost complex structure on $Y$, let 
\[
\cM(Y, (L_0,\ldots, L_k), (p_0,\ldots, p_k); \beta, J)\\=\{u \in \cD(Y, (L_0,\ldots, L_k), (p_0,\ldots,p_k)) \mid [u] = \beta, \overline\partial_J(u)=0\}
\]
be the moduli space of $J$-holomorphic discs in $\cD(Y, (L_0,\ldots, L_k), (p_0,\ldots, p_k))$ that are in the homotopy class $\beta$.  The structure maps $\mu^k:CF(L_{k-1}, L_k)\otimes \cdots \otimes CF(L_0,  L_1)\to CF(L_0, L_k)$ of the Fukaya category are given by the count of $J$-holomorphic discs in the above moduli space, over all homotopy classes, weighted by a factor that depends on the symplectic area of each disc $u$; see \cite[Definition 5.13]{ACLLb}. {In an LG model, these discs are sections of $v$ \cite[Equation (17.2)]{seidel}. That is, we have the following result, under the assumption that the Lagrangians $\{L_j\}_{j=0}^k$ are in correct position as in Remark \ref{rmk:admissible flow}.}

\begin{lemma}\label{lem:mu_k} Let $(Y,\omega, v)$ be a symplectic Landau-Ginzburg model. Suppose that $J \in \text{End}(TY)$ is an almost complex structure compatible with $\omega$, in the sense that $\omega(\cdot,J \cdot)$ is a Riemannian metric and $J$ is an isometry. Furthermore, assume that $v:Y \to \bC$ is $(J,j)$-holomorphic where $j$ is a complex structure on $\mathbb{C}$. 

Take integer $k>0$. For complex structure $j_1$ on $\mathbb{D}$, if $u: (\bD, \dd \bD) \to (Y, \bigcup_{j=0}^k L_j)$ is a $(j_1,J)$-holomorphic curve counted in $\mu^k$, then $v \circ u$ is either constant or injective with image given by a $(k+1)$-gon.
\end{lemma}

\begin{corollary}\label{cor:section_or_fiber}
If $u: (\bD, \dd \bD) \to (Y, \bigcup_{j=0}^k L_j)$ is a $(j_1,J)$-holomorphic curve counted in $\mu^k$, it must be either contained in a fiber or it can be expressed as a section of $v$.
\end{corollary}

Thus $J$-holomorphic discs with fixed Lagrangian boundary in an LG model are either sections over the base or discs fully in the fiber. Note that $v\circ u$, as a holomorphic map, will satisfy a maximum principle in the base.

\begin{proof}[Proof of Lemma \ref{lem:mu_k}]
    We summarize the assumptions in the lemma with the following commutative diagram:
\begin{center}
    \begin{tikzcd}[column sep=large, row sep=large]
{(\mathbb{D},\partial \mathbb{D})} \arrow[rr, "u", "{(j_1,J)\text{-hol}}"'] \arrow[rrd, "v \circ u","{(j_1,j)\text{-hol}}"'] & & {(Y,\bigcup_{j=0}^k L_j)} \arrow[d, "v","{(J,j)\text{-hol}}"'] \\
& & {\mathbb{C}}
\end{tikzcd}
\end{center}

By the open mapping theorem for holomorphic maps, $v \circ u:(\mathbb{D},j_1) \to (\mathbb{C},j)$ is either constant or its image is an open subset of $\mathbb{C}$. If it is not constant, then  $v\circ u(\partial \bD)$ is a simple closed curve of winding number one around any point $a$ in its interior, as follows. On the boundary, $v\circ u$ maps to a counterclockwise configuration of $\gamma_{L_0}, \ldots, \gamma_{L_k}$ with transverse intersections. Transversality follows from $L_i \pitchfork L_j$; if $\gamma_{L_i}$ and $\gamma_{L_j}$ intersect non-transversely then consider the horizontal lifts of their tangent vectors at a base intersection point to one in the total space. The lifted vectors would be parallel and $L_i,L_j$ would intersect non-transversely, contradiction. As $k>0$, there is more than one Lagrangian in the boundary and the boundary must wind exactly once. This is because $v\circ u(\dd_j\bD) \subset \gamma_{L_j}$ by \cite[Definition 5.1, D5]{ACLLb} and $\gamma_{L_j} \pitchfork \gamma_{L_k}$ for $j \neq k$, so each $\gamma_{L_j}$ can only be traversed once. Therefore, the map is injective by the residue theorem and that the boundary of the curve winds once around any point $a$ in its interior. That is,
\begin{equation}
    \frac{1}{2\pi i} \oint_{\dd \mathbb{D}} \frac{d(v \circ u)}{(v \circ u)-a} = n(v \circ u(\dd \mathbb{D}),a) = 1 = \text{zeros}((v \circ u)-a)
\end{equation}
and so $v\circ u$ achieves the value $a$ exactly once.

\end{proof}

\begin{proof}[Proof of Corollary \ref{cor:section_or_fiber}]
If $v\circ u$ is constant at value $c$, then the disc given by the image of $u$ is contained in a fiber $Y_{c}$. Otherwise $v \circ u$ is injective by Lemma \ref{lem:mu_k} so there exists $z(s,t) \in \bD$ uniquely defined by
\begin{equation}
(v \circ u)(z(s,t))=R(s,t)
\end{equation}
for $(s,t) \in [0,1] \times [0,1]=I \times I$, where $R(s,t)$ parametrizes
\begin{equation}
    D:=(v \circ u)(\bD).
\end{equation}
Then we can define $u_D:D \to Y$  
\begin{equation}\label{eq:make_section}
    u_D(R(s,t)) = u(z(s,t))
\end{equation}
as a $((v\circ u)_*j_1,J)$-holomorphic section of $v$ if $u$ was a $(j_1,J)$-holomorphic curve.  
\end{proof}

For a disc $u \in \cD(Y, (L_0,\ldots, L_k), (p_0,\ldots, p_k))$, if we deform one of the boundary Lagrangians by a Lagrangian isotopy, we obtain a new disc $u'$ with the isotoped Lagrangian boundary.  We use the explicit construction of $u'$ given in \cite[Equation (6.11)]{ACLLb}, {see Figure \ref{fig:move_L2_paper5}}.  Note that $u'$ might not be $J$-holomorphic even if $u$ is.

\begin{corollary}\label{cor:endpts}
    For $u \in \cD(Y, (L_0,\ldots, L_k), (p_0,\ldots, p_k); \beta, J)$, let $u'$ be the deformation of $u$ obtained by performing a {fibered} Lagrangian isotopy $\psi$ on $L_m$, defined by \cite[Equation (6.11)]{ACLLb}. Then the difference in the symplectic areas of $u$ and $u'$ depends only on the isotopy $\psi$, the symplectic form $\omega$, and the endpoints of  $u(\dd_m \bD)$. 
\end{corollary}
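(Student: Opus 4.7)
The plan is to express the area change as an integral over the strip swept by the isotopy along the arc $u(\dd_m\bD)$, rewrite that integrand using exactness of $\psi$ from Lemma \ref{lem:Lag_isot_area}, and apply Stokes' theorem to localize everything at the two endpoints of the arc.

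First I would set up the strip
$$
S: \dd_m\bD \times [0,1] \lra Y, \qquad (z,s) \longmapsto \psi(u(z), s),
$$
and use the explicit construction of \cite[Equation (6.10)]{ACLLb} to identify $u'$ with $u$ glued to $S$ along $u(\dd_m\bD)$. Up to a sign fixed by orientation conventions, this yields
$$
\int_{u'}\omega \; - \; \int_u \omega \;=\; \int_{\dd_m\bD\times [0,1]} S^*\omega \;=\; \int_{\dd_m\bD\times [0,1]} (u|_{\dd_m\bD}\times \mathrm{id})^* \psi^*\omega.
$$

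Next, Lemma \ref{lem:Lag_isot_area} guarantees that $\psi$ is exact, so there exists $H: L_m\times [0,1]\to \bR$ with $\psi^*\omega = d(H\,ds)$, and the integrand becomes an exact $2$-form on the rectangle $\dd_m\bD\times[0,1]$. Applying Stokes, the two horizontal edges $\dd_m\bD\times\{0,1\}$ contribute nothing because $ds$ vanishes on them, and the remaining two vertical edges sit over the two corners of $u$ bounding the arc $u(\dd_m\bD)$, which I will call $p_-$ and $p_+$. The area difference then reduces to
$$
\int_0^1 H(p_+,s)\,ds \; - \; \int_0^1 H(p_-,s)\,ds.
$$
Since the primitive $H$ is determined by $\psi$ and $\omega$ only up to addition of a function of $s$ alone, and such an $s$-only term cancels in the pointwise difference above, the right-hand side depends only on $\psi$, $\omega$, and the endpoints $p_\pm$ of $u(\dd_m\bD)$, as claimed.

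I expect the main obstacle to be bookkeeping: tracking the orientation sign in the gluing identification $u' = u \# S$ and confirming that only the two corners adjacent to $\dd_m\bD$ contribute, rather than any global data of the arc $u(\dd_m\bD)$ itself. The geometric content is entirely packaged in Lemma \ref{lem:Lag_isot_area}; once exactness of the parallel-transport isotopy is in hand, the rest is a direct application of Stokes.
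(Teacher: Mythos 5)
Your proof is correct and follows essentially the same route as the paper: the paper quotes the formula $\int_{\dd_m \bD}(\dd_m u)^*\bigl(\int_0^1 b_s\,ds\bigr)$ for the area difference from \cite[Proposition 6.10]{ACLLb} and then invokes Lemma \ref{lem:Lag_isot_area} to write $\int_0^1 b_s\,ds = df$ with $f=\int_0^1 H_s\,ds$, so that the integral collapses to $f(p_{m+1})-f(p_m)$. Your strip-plus-Stokes computation simply re-derives that cited formula directly, and your final expression $\int_0^1 H(p_+,s)\,ds - \int_0^1 H(p_-,s)\,ds$ is exactly the paper's $f(p_{m+1})-f(p_m)$.
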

\begin{proof} When deforming $L_m$ by an arbitrary Lagrangian isotopy, the difference in the symplectic areas of $u$ and $u'$ is provided in \cite[Theorem 6.10]{ACLLb}, specifically \cite[Equation (6.14)]{ACLLb}. That is,
\begin{equation}
    {\int_\bD u^*\omega = \int_\bD u'^* \omega + \int_{\dd_m \bD}(\dd_mu)^*\left(\int_0^1b_sds\right),}
\end{equation}
where $\int_0^1 b_sds$ is {exact by Lemma \ref{lem:Lag_isot_area}}.  That is, $b_s=dH_s$ and $\int_0^1b_sds=df$ for $f= \int_0^1 H_sds$ a smooth function on $L_m$ by \cite[Lemma 6.4]{ACLLb}.  Then by \cite[Remark 6.11]{ACLLb},
\begin{equation}
    {\int_\bD u^* \omega - \int_\bD u'^*\omega =f(p_{m+1})-f(p_m) }
\end{equation}
where $p_{m}$ and $p_{m+1}$ are the endpoints of $u(\dd_m\bD)$, and $b$, therefore $f$, only depends on $\psi$ and $\omega$. 
\end{proof}

\begin{remark}
    For the purpose of homological mirror symmetry, one generally needs to consider versions of Fukaya categories where the Lagrangians are equipped with extra structures such as a line bundle on the Lagrangian with a unitary connection. In that case, $\mu^k$ is given by a count of $J$-holomorphic discs weighted by a factor $\rho(u)$ that depends on both the symplectic area of each disc $u$ and the holonomy of the connection around the boundary of $u$.  Corollary \ref{cor:endpts} can be extended to say that the difference between the weights $\rho(u)$ and $\rho(u')$ also only depends on the endpoints of $u(\partial_m\bD)$, also due to \cite[Theorem 6.10, Lemma 6.4, and Remark 6.11]{ACLLb}.  The formula relating the weights given in \cite[Theorem 6.10]{ACLLb} depends only on the Lagrangian isotopy $\psi$ and the symplectic form $\omega$, not on the connection.
\end{remark}

The primary application of the following theorem is to assist in proving examples of the homological mirror symmetry conjecture. One way to prove evidence of the conjecture is to perform calculations on both sides of the mirror correspondence and show they are equal. One such step is composition $\mu^2$ in the Fukaya category of the symplectic manifold (and show it mirrors composition in the mirror category on the mirror manifold). This structure map counts solutions to a PDE with boundary conditions specified by Lagrangians. There is ambiguity in how fibered Lagrangians are drawn because they depend on the curves chosen in the base. Thus, the following theorem can be used to compute disc areas appearing in $\mu^2$ in a symplectic Landau-Ginzburg model by decomposing into a disc area appearing in $\mu^2$ in the reference fiber, plus a base contribution. One application of this theorem is \cite[Equation (3.88)]{CV}. Another application will appear in forthcoming work on homological mirror symmetry for theta divisors \cite{ACLL2}.

\begin{theorem}\label{cor:triangle disc area}
  Let $(Y,\omega, v)$ be a symplectic Landau-Ginzburg model. Suppose that $J \in \text{End}(TY)$ is an almost complex structure compatible with $\omega$, in the sense that $\omega(\cdot,J \cdot)$ is a Riemannian metric and $J$ is an isometry. Furthermore, assume that $v:Y \to \bC$ is $(J,j)$-holomorphic where $j$ is a complex structure on $\mathbb{C}$. Let 
   $u \in \cM(Y, (L_0,L_1, L_2), (p_0,p_1, p_2); \beta, J)$ be a $J$-holomorphic triangle counted in $HF(L_{1},L_2) \otimes HF(L_0, L_{1}) \to HF(L_0,L_2)$, where $L_0, L_1, L_2$ are parallel fibered Lagrangians with $\gamma_{L_0}(0)=\gamma_{L_1}(0) = \gamma_{L_2}(0)$.  
  
   Then the area $\int_\bD u^* \omega$ of $u$ is a sum of base and fiber contributions. The fiber contribution is the area of a curve counted in the composition in the fiber $HF(\ell_1',\ell_2') \otimes HF(\ell_0',\ell_1') \to HF(\ell_0',\ell_2')$ for fiber Lagrangians $\ell_j'\in Y_{\gamma_{L_j}(0)}$ depending on the singularities of $v$.
\end{theorem}

\begin{proof}
\begin{figure}
    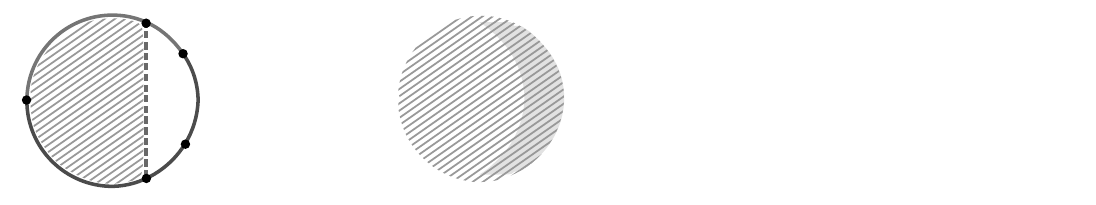
        \caption[LoF entry]{The leftmost disc in this figure is the domain for maps $u$  and $u'$, with $u$ in the form of Equation \eqref{eq: map u in D} with $k+1=3$. As we deform $L_0$ by a Lagrangian isotopy $\psi$, the map $u$ deforms to $u':(\bD, \partial'_0\bD\cup \partial'_1\bD\cup \partial'_2\bD, \{z_0', z_1', z_2\})\to (Y, \psi_1(L_0)\cup L_1 \cup L_2, \{\psi_1(p_0), \psi_1(p_1), p_2\})$. 
        
       \hspace{1em} We use the piecewise smooth construction for $u'$ given in \cite[Equation (6.11)]{ACLLb}.  To very briefly summarize, this construction involves first deforming the domain disc $\bD$ using  maps $\phi_1$ and $\phi_2$.  The map $\phi_1$ sends the region shaded by lines in the leftmost figure to the entire $\bD$ in the middle figure.  The map $\phi_2$ is orientation reversing, and it sends the solidly shaded region in the leftmost figure to $\partial_0\bD\times [-1,0]$ as shown in the middle figure.  Both maps agree on
    the line segment $\overline{z_0z_1}$, which is the dotted vertical line in the leftmost figure, so $\phi_1\cup_{\overline{z_0z_1}}\phi_2$ is a continuous map.  The map $u'$ is then defined by composing $\phi_1$ by $u$ and composing $\phi_2$ by $\psi_s\circ \partial_0 u$.    

    \hspace{1em} In the case of Theorem \ref{cor:triangle disc area}, the image of $u'$ is the union of the faces on the right side and top of the pyramid shown in Figure \ref{fig: prism_complex_paper5} (the illustrations in the first two images in Figure \ref{fig:move_L2_paper5} are the domains). The rightmost picture in this Figure \ref{fig:move_L2_paper5} shows the image of $u'$ under $v$.}
        \label{fig:move_L2_paper5}
    \end{figure}  

    Let $u'\in \cD(Y, (\psi_1(L_0),L_1, L_2), (\psi_1(p_0), \psi_1(p_1), p_2))$ be the disc obtained by deforming $L_0$ using the parallel fibered Lagrangian isotopy $\psi$ in Equation \eqref{eq:parallel isotopy} such that $\gamma_{L_0,s}$ moves to $c=v(p_2)$. That is, the isotopy occurs as $s$ increases from $s=0$ to $s=1$, though, visually in the base in Figure \ref{fig: prism_complex_paper5}, the arc in $v(L_0)$ is moving from right to left. The map $u'$ is defined by keeping $u$ and then ``stretching" it further as we move the Lagrangian \cite[Equation (6.11)]{ACLLb}. Then we show that $[u']$ admits a relatively homologous representative $u''$ completely contained in the fiber $Y_{c}$, with $[u']=[u'']\in H_2(Y,\psi_1(L_0) \cup L_1 \cup L_2)$.  We do this by constructing a contractible singular 3-chain $Q$ whose relative boundary is $[u']-[u'']$. It is a continuous map from the unit cube to $Y$, with image depicted in Figure \ref{fig: prism_complex_paper5}. The part of the boundary of $Q$ given by the quadrilateral on the top face in Figure \ref{fig: prism_complex_paper5} is an example of $P$ in \cite[Equation (6.11)]{ACLLb}, for LG models. 
    
    First we define the region in the base that $Q$ will cover. Define $R(s,t): I \times I \to \bC$ so that the top of the square $I \times \{1\}$ maps to the $v(L_2)$ side of the triangle in the base, then going around counterclockwise the left of the square $\{0\} \times I$ maps to $v(L_0)$, the bottom of the square $I \times \{0\}$ maps to $v(L_1)$, and the right of the square $\{1\} \times I$ maps to $c$. We can reparametrize, with parameter $t_L$, the base curves defining the Lagrangians to match $R(s,t)$ along the boundary. And $R: (0,1) \times (0,1) \to (v\circ u)(\text{int}(\bD))$ is a homeomorphism on the interior. Therefore,
\begin{equation}\label{eq:match_params}
\begin{aligned}
    R(s,1) & = \gamma_{L_{2}}(t_{L_{2}}(s)) \\
    R(0,t) &= \gamma_{L_{0}}(t_{L_{0}}(t))  \\
    R(s,0) &= \gamma_{L_{1}}(t_{L_{1}}(s)) \\
    R(1,t) &= c.
    \end{aligned}
\end{equation}
We denote the isotopy in the base $v(\psi_s(L_0))$ by $\gamma_{L_0,s}$ so that
\begin{equation}
    h(t,s)=R(s,t) = \gamma_{L_0,s}(t_{L_0,s}(t)), \qquad (s,t) \in I \times I.
\end{equation}
    
Now we define $Q$. Recall $\Phi_{\gamma(0)  \xrightarrow[]{\gamma} \gamma(1)}$ denotes parallel transport along a path $\gamma$ between the fibers over $\gamma(0)\to \gamma(1)$. Define the singular 3-chain $Q$ to be $u_D$ from Equation \eqref{eq:make_section} on the bottom face $I \times I \times \{0\}$, the continuous map 
\begin{equation}\label{eq:Jhol_tri_fiber}
u''(z(s,t)):=\Phi_{\gamma_{L_0}(0)=\gamma_{L_0,1}(0)\xrightarrow[]{\gamma_{L_0,1}} R(1,t)}\Phi_{R(s,t) \xrightarrow[]{\gamma_{L_0,s}} \gamma_{L_0}(0)}\circ u_D(R(s,t))    
\end{equation}
to the fiber over $c$ on the top face $I \times I \times \{1\}$, and interpolate in between:
    \begin{equation}
        \begin{aligned}
              Q:  I \times I \times I & \to Y\\
(s,t,e) & \mapsto \begin{cases}
    u_D(R(s,t)), & s \geq e\\
\Phi_{\gamma_{L_0}(0) \xrightarrow[]{\gamma_{L_0,e}} R(e,t)}     \Phi_{R(s,t) \xrightarrow[]{\gamma_{L_0,s}} \gamma_{L_0}(0)} \circ u_D(R(s,t)),& s \leq e
\end{cases}.
        \end{aligned}
    \end{equation} 

   Informally, the image looks like peeling a sticker off of $u_D$ and towards the fiber over $c$. When $e=0$, $Q|_{I \times I \times \{0\}}=u_D$ maps to the image of the original curve $u$. When $e=1$, $Q|_{I \times I \times \{1\}}=u''$ maps to the fiber over $c$. In between, for a fixed $e$, $Q|_{I \times I \times \{e\}}$ maps to the original curve $u$ for $s \in [e,1]$. As $s$ decreases from $e$ to 0, the image looks like peeling back a sticker. The boundary of $Q$ is depicted in Figure \ref{fig: prism_complex_paper5} as follows:
    
\begin{equation}\label{eq:boundaryofQ}
    \begin{aligned}
        e=0, \quad 0 \leq s,t \leq 1 \quad & \mapsto [u] \text{ (downward-facing triangle) }\\
        e=1, \quad 0 \leq s, t \leq 1 \quad & \mapsto [u''] \text{ (fiber triangle) }\\
        s=0, \quad 0 \leq e, t \leq 1 \quad & \mapsto [u'] - [u] \text{ (top quadrilateral obtained from the Lagrangian isotopy) }\\
        s=1, \quad 0 \leq e, t \leq 1 \quad & \mapsto u_D(c) \text{ (constant at bottom vertex) }\\
        t=0, 1, \quad 0 \leq s,e \leq 1 \quad & \mapsto \text{image} \subset L_{1} \cup L_{2} \text{ (back and front triangles) }.
    \end{aligned}
\end{equation}

By Equation \eqref{eq:Jhol_tri_fiber}, $u''(\bD) \subset Y_c$ is in the fiber over $c$ with boundary on the following Lagrangians. Recall that $L_j$ is obtained by parallel transporting $\ell_j \in Y_{\gamma_{L_j}(0)}$ along $\gamma_{L_j}$. Under the Lagrangian isotopy, we trace backwards along this path when $j=1,2$. So these two Lagrangians are fiber Lagrangians, parallel transported from $\gamma_{L_j}(0)$ to $c$:
\begin{equation}
  \Phi_{ \gamma_{L_j}(t_{L_j}(s)) \to \gamma_{L_j}(t_{L_j}(1))} \circ \Phi_{\gamma_{L_j}(0)\to \gamma_{L_j}(t_{L_j}(s))}(\ell_j) = \Phi_{\gamma_{L_j}(0) \to \gamma_{L_j}(t_{L_j}(1))} (\ell_j), \qquad j=1,2.
\end{equation}
From $s=0$ to $s=1$, the side on $L_0$ parallel transports to 
\begin{equation}
    \Phi_{\gamma_{L_0}(0) \xrightarrow[]{\gamma_{L_0,1}} R(1,t)}     \Phi_{R(0,t) \xrightarrow[]{\gamma_{L_0,0}} \gamma_{L_0}(0)}(\Phi_{ \gamma_{L_0}(0)  \xrightarrow[]{\gamma_{L_0,0}} R(0,t)}\ell_0)=\Phi_{\gamma_{L_0}(0) \xrightarrow[]{\gamma_{L_0,1}} R(1,t)} (\ell_0).
\end{equation}
Thus $u''$ is in the fiber bounded by fiber Lagrangians:
\begin{equation}
    [u''] \in H_2(Y_c, \Phi_{\gamma_{L_0}(0) \xrightarrow[]{\gamma_{L_0,1}} R(1,t)} (\ell_0) \cup \Phi_{\gamma_{L_1}(0) \to \gamma_{L_1}(t_{L_1}(1))} (\ell_1) \cup \Phi_{\gamma_{L_2}(0) \to \gamma_{L_2}(t_{L_2}(1))} (\ell_2))
\end{equation}
and by Equation \eqref{eq:boundaryofQ}
\begin{equation}
\dd Q = 0 = -[u]+[u'']-([u']-[u])+(\text{class in Lagrangians}) 
\end{equation}
which implies, relative the Lagrangians,
\begin{equation}\label{eq:homolog_tris}
    [u']=[u''] \in H_2\left(Y, \psi_1(L_0) \cup L_{1} \cup L_{2} \right).
\end{equation}

By Corollary \ref{cor:endpts}
\begin{equation}
    \int_\bD u^* \omega  = \int_\bD (u')^*\omega + \int_{\dd_0 \bD}(\dd_0 u)^* \theta_\psi = \int_\bD (u')^*\omega + f(p_1)-f(p_0)
\end{equation}
where recall
\begin{equation}
    \psi^*\omega = b \wedge ds, \qquad \theta_\psi = \int_0^1 b_s ds=df.
\end{equation}
We know $\int_{u'(\bD)} \omega = \int_{u''(\bD)} \omega$ by Equation \eqref{eq:homolog_tris}. Therefore,
\begin{equation}
\int_\bD u^*\omega = \int_\bD (u'')^*\omega +  f(p_1)-f(p_0).
\end{equation}
The term $\int_\bD (u'')^*\omega$ is a contribution only in the fiber. The term $f(p_1)-f(p_0)$ depends only on the isotopy $\gamma_{L_0,s}$ in the base, where
$$
[\gamma_{L_0,I}(t_{L_0,I}(I))]=v_*[\beta] \in H_2(v \circ u(\bD), \gamma_{L_0}(t_{L_0}(I)) \cup \gamma_{L_1}(t_{L_1}(I)) \cup \gamma_{L_2}(t_{L_2}(I))).
$$
Note that $\beta$ is a relative homotopy class, but for area it suffices to consider its relative homology.

To compute in the reference fiber over $\gamma_{L_0}(0) = \gamma_{L_1}(0) = \gamma_{L_2}(0)$, we apply the symplectomorphism induced by moving along a path from $c$ to $\gamma_{L_0}(0)$ given by $\Phi_{c \xrightarrow[]{\gamma_{L_0,1}} \gamma_{L_0}(0)}$. Let
\begin{equation}
\begin{aligned}
    \Phi_1 & := \Phi_{c \xrightarrow[]{\gamma_{L_0,1}} \gamma_{L_0}(0)}\Phi_{\gamma_{L_1}(0) \to \gamma_{L_1}(t_{L_1}(1))}\\
    \Phi_2 & :=\Phi_{c \xrightarrow[]{\gamma_{L_0,1}} \gamma_{L_0}(0)} \Phi_{\gamma_{L_2}(0) \to \gamma_{L_2}(t_{L_2}(1))} .
\end{aligned}
\end{equation} 
Then the disc $\Phi_{c \xrightarrow[]{\gamma_{L_0,1}} \gamma_{L_0}(0)} \circ u''$ represents a class in 
$$
H_2(Y_{\gamma_{L_0}(0)},\ell_0 \cup \Phi_1(\ell_1) \cup \Phi_2(\ell_2)).
$$ 
To compute Floer theory in the reference fiber, which is invariant under Hamiltonian equivalence of Lagrangians, one needs to determine each monodromy $\Phi_1$ and $\Phi_2$. The resulting Lagrangians are $\ell_0'=\ell_0, \ell_1'=\Phi_1(\ell_1)$, and $\ell_2'=\Phi_2(\ell_2)$.  Note that one may equivalently go from $c$ to $\gamma_{L_0}(0) = \gamma_{L_1}(0) = \gamma_{L_2}(0)$ along $\gamma_{L_1}$ (or $\gamma_{L_2}$) and determine the corresponding monodromies on $\ell_0$ and $\ell_2$ (or $\ell_0$ and $\ell_1$), respectively.
\end{proof}

\begin{remark}Note that once we perform the Lagrangian isotopy as in Figure \ref{fig: prism_complex_paper5}, the three intersection points $p_0\in L_0\cap L_2$, $p_1 \in L_0 \cap L_1$, and $p_2 \in L_1 \cap L_2$ have all moved to the same fiber $Y_{c}$. The only $J$-holomorphic discs in $\cM(Y, (\psi_1(L_0),L_1, L_2), (\psi_1(p_0),\psi_1(p_1), p_2); \beta', J)$  that are relatively homologous to $u'$ can be shown to be contained in the fiber over $c$ using the open mapping theorem and maximum modulus principle. The boundary of the domain depicted in Figure \ref{fig:move_L2_paper5} maps under $u'$ and then $v$ as follows
    \begin{equation}
  \dd \bD \xrightarrow[]{u'}   {\psi_1(u(\dd_0\bD))} \cup {\bigcup_{s=0}^1 \psi_s(u(z_1))}\cup {u(\dd_1\bD)}  \cup {u(\dd_2 \bD)}\cup {\bigcup_{s=0}^1 \psi_s(u(z_0))} \xrightarrow[]{v} c \cup {\gamma_{L_1}(t_{L_1}([0,1]))} \cup {\gamma_{L_2}(t_{L_2}([0,1]))}.
    \end{equation}
This consists of two line segments, which therefore cannot bound a nontrivial disc. In the proof of Theorem \ref{cor:triangle disc area}, we constructed a specific relatively homologous curve in the fiber that can be used in computations.
    
\end{remark}

\begin{remark}
    We expect a more general statement can be made for $\mu^k$. In this case, we can move all intersection points to the fiber over $c=v(p_k)$ with $k-1$ Lagrangian isotopies $\{h^j(t,s)\}_{j=0}^{k-2}$. The isotopy on the boundary $h^0(t,0)$ moves $v(p_0) \in v(L_0 \cap L_k)$ clockwise along $v(L_k)$ to $v(p_k)$. For $j=1,\ldots,k-3$ the $j$th projected intersection point $v(p_j)\in v(L_j \cap L_{j-1})$  is moved clockwise along $h^{j-1} \circ \ldots \circ h^0 \circ v(L_j)$. That is, we move each intersection point in the base to $c$, isotoping one Lagrangian after another so that the intersection point in the base traces along the previously moved projected Lagrangian, to $c$. At this point, we are in the case of Theorem \ref{cor:triangle disc area} in the base and need one more isotopy to move the last two intersection points to lie over $v(p_k)$.
\end{remark}

\section{Grading in a symplectic Landau-Ginzburg model}\label{sec:Maslov_fibrn}

To prove that the $\bZ$-grading on morphisms splits into  fiber and base contributions, we show that such a splitting occurs for each of the concepts needed to define the grading. That is, in \textsection \ref{sec:fr_bdle_paper5} we show that the structure groups for symplectic and unitary frame bundles on the symplectic Landau-Ginzburg model reduce to block-diagonal matrices, in particular the Lagrangian Grassmannian splits into base and fiber parts in \textsection \ref{sec:LGr_paper5}. Hence, quadratic volume forms and corresponding squared phase maps split in \textsection \ref{sec:quadratic}. Since these can be used to define a grading on a Lagrangian, we have a base and fiber splitting of Lagrangian gradings in \textsection \ref{sec:grade_Lag_paper5}. Lastly, as the $\bZ$-grading on intersection points can be defined from the Lagrangian gradings, we have a splitting on the $\bZ$-grading of the morphisms in \textsection \ref{sec:grade_mors_paper5}.

\subsection{Framed bundles}\label{sec:fr_bdle_paper5}
We still have a decomposition into $\Ver$ and $\Hor$ on a larger space containing the symplectic fibration $Y_C \to C$. Let $\mathrm{crit}(v)\subset Y$ be the set of critical points of $v$, and let $Y':= Y\setminus \mathrm{crit}(v)$. Then $TY' = TY|_{Y'}$ is the direct sum
    of two subbundles:
    $$
   TY' = \Ver \oplus \Hor.
$$

\begin{remark}
    The horizontal vector subbundle $\Hor$ can be trivialized by Hamiltonian vector fields $\{X_{\re(v)}$, $X_{\im(v)}\}$ defined by $\iota_{X_{\re(v)}} \omega = d \re(v)$ and $\iota_{X_{\im(v)}} \omega = d \im(v)$. This realizes an explicit isomorphism $\Hor \stackrel{\cong}{\lra} Y'\times \bR^2$ of rank 2 real vector bundles over $Y'$. 
\end{remark}

\subsubsection{Symplectic frame bundles}
In the proof of Lemma \ref{lem:Lag_isot_area},  $\omega^\Ver$ and $\omega^\Hor$ are sections of
$\Lambda^2 \Ver^*$ and $\Lambda^2 \Hor^*$, respectively.
The restriction of the symplectic vector bundle $(TY,\omega)$ to $Y'$ is an $\omega$-orthogonal direct sum of two symplectic vector bundles:
\begin{equation}\label{eqn:TY-omega}
(TY', \omega|_{Y'})= (TY,\omega)\big|_{Y'} = (\Ver, \omega^\Ver) \oplus (\Hor, \omega^\Hor).
\end{equation}

The symplectic frame bundle $F(\Ver, \omega^\Ver)$ 
(resp. $F(\Hor,\omega^\Hor)$) is
a principal $\Sp(2n,\bR)$-bundle (resp. $\Sp(2,\bR)$-bundle) over $Y'$. The fiber product
$$
F(\omega^\Ver\oplus \omega^\Hor):= F(\Ver,\omega^\Ver)\times_{Y'} F(\Hor, \omega^\Hor)
$$
is a principal $H$-bundle over $Y'$ where $H=\Sp(2n,\bR)\times \Sp(2,\bR)$. The fiber of $F(\omega^\Ver \oplus \omega^\Hor)$ over a point
$p\in Y'$ is
$$
F(\omega^\Ver\oplus \omega^\Hor)_p = \Big\{ (e_1,\ldots, e_{2n+2}) \ : \ \begin{array}{ll} (e_1,\ldots,e_{2n})\text{ is a symplectic basis of  }(\Ver_p, \omega^\Ver(p))  \\
(e_{2n+1}, e_{2n+2})\textup{ is a symplectic basis of }(\Hor_p,\omega^\Hor(p))  \end{array}
\Big\}
$$
The symplectic frame bundle $F(TY,\omega)$ is a principal
$\Sp(2n+2,\bR)$-bundle over $Y$; its fiber over a point $p\in Y$ is
$$
F(TY,\omega)_p =\{ (e_1,\ldots, e_{2n+2}) \ : \ (e_1,\ldots, e_{2n+2}) \textup{ is a symplectic basis of }(T_pY, \omega(p))\}. 
$$
We have an inclusion $F(\omega^\Ver\oplus \omega^\Hor)\subset F(TY,\omega)|_{Y'} = F(TY', \omega|_{Y'})$. The structure group
of the principal $\Sp(2n+2, \bR)$-bundle $F(TY',\omega|_{Y'})$ can be reduced to the subgroup $H=\Sp(2n,\bR)\times \Sp(2,\bR)$:
$$
F(TY', \omega|_{Y'} ) = 
F(\omega^\Ver\oplus \omega^\Hor) \times_H \Sp(2n+2,\bR).
$$

\subsubsection{Unitary frame bundles}\label{sec:unitary-frame}
Let $z=x+iy$ be the complex coordinate on $\bC$. We equip $\bC$ with the standard complex structure $\bj:T\bC\to T\bC$ given by multiplication by $i$, which is compatible with the standard symplectic structure $$\omega_{\bC} =dx\wedge dy =\frac{i}{2}dz\wedge d\bar{z}.$$
We now assume $v: Y\to \bC$ is $J$-holomorphic where $J: TY\to TY$ is an almost complex structure compatible with $\omega$.  Then $J$ preserves $\Ver =\ker(dv|_{Y'})$ and $\Hor=(\ker(dv|_{Y'}))^{\perp_\omega}$ because $\bj\circ dv = dv\circ J$; its restrictions $J^\Ver:\Ver\to \Ver$ and $J^\Hor:\Hor\to \Hor$
are compatible with $\omega^\Ver$ and $\omega^\Hor$, respectively. 
The restriction of the $C^\infty$ complex vector bundle of $T_{\bC}Y = (TY,J)$ to $Y'$ is a direct sum of two $C^\infty$ complex subbundles:
\begin{equation}\label{eqn:TY-J}
T_\bC Y' = \Ver_{\bC} \oplus \Hor_{\bC}.
\end{equation}
where $\Ver_{\bC} = (\Ver, J^{\Ver})$ and $\Hor_{\bC}= (\Hor,J^\Hor)$. 
\begin{remark}
We have an isomorphism
$\Hor_\bC\to (v|_{Y'})^*T\bC$ of $C^\infty$ complex line bundles over $Y'$. However, this is not an isomorphism
of symplectic vector bundles in general. More precisely,
$$
\left. (v^*\omega_{\bC})\right|_{\Hor} = |dv|_{J,\omega}^2 \omega^\Hor.
$$
\end{remark}
Combining \eqref{eqn:TY-omega} and \eqref{eqn:TY-J}, we obtain the following decomposition as a direct sum of two Hermitian vector bundles:
\begin{equation}\label{eqn:TY-hermitian}
(T_\bC Y', \omega|_{Y'}) = 
(\Ver_\bC,\omega^\Ver)\oplus
(\Hor_\bC, \omega^\Hor).
\end{equation}
The unitary frame bundle $F(\Ver_\bC, \omega^\Ver)$ 
(resp. $F(\Hor_\bC, \omega^\Hor)$) is
a principal $U(n)$-bundle (resp. $U(1)$-bundle) over $Y'$. The fiber product
$$
F\left( (\omega^\Ver, J^\Ver) \oplus (\omega^\Hor, J^\Hor) \right) := F(\Ver_\bC,\omega^\Ver)\times_{Y'} F(\Hor_\bC, \omega^\Hor)
$$
is a principal $U(n)\times U(1)$-bundle over $Y'$.
The fiber of $F( (\omega^\Ver, J^\Ver) \oplus (\omega^\Hor, J^\Hor) )$ over a point
$p\in Y'$ is
\begin{eqnarray*}
&& F( (\omega^\Ver, J^\Ver) \oplus (\omega^\Hor, J^\Hor) )_p  \\
&=& \Big\{ (e_1,\ldots, e_{n+1}) \ : \ \begin{array}{ll} (e_1,\ldots,e_n)\text{ is an ordered orthonormal $\bC$-basis of }( (\Ver_{\bC})_p \simeq \bC^n, \omega^\Ver(p))  \\
e_{n+1} \textup{ is a unit vector in  }( (\Hor_{\bC})_p \simeq \bC,\omega^\Hor(p))  \end{array}
\Big\}
\end{eqnarray*}
The unitary frame bundle $F(T_{\bC}Y,\omega)$ is a principal
$U(n+1)$-bundle over $Y$; its fiber over a point $p\in Y$ is
$$
F(T_{\bC}Y,\omega)_p =\{ (e_1,\ldots, e_{n+1}) \ : \ (e_1,\ldots, e_{n+1}) \textup{ is an orthonormal $\bC$-basis of }( (T_{\bC}Y)_p\simeq \bC^{n+1}, \omega(p))\}. 
$$
We have an inclusion $F( (\omega^\Ver, J^\Ver) \oplus (\omega^\Hor, J^\Hor) )\subset F(T_{\bC} Y',\omega|_{Y'})$. The structure group
of the principal $U(n+1)$-bundle $F(T_{\bC}Y',\omega|_{Y'})$ can be reduced to the subgroup $U(n)\times U(1)$:
$$
F(T_{\bC}Y', \omega|_{Y'}) = 
F \left( (\omega^\Ver, J^\Ver)\oplus (\omega^\Hor,  J^\Hor) \right) \times_{U(n)\times U(1)} 
U(n+1).
$$

\subsection{Lagrangian Grassmannian bundles}\label{sec:LGr_paper5}
Over $Y$, we have a Lagrangian Grassmannian bundle $LGr(TY,\omega)= F(T_\bC Y, \omega)/O(n+1)$ and a map
$$
\phi_Y: LGr(TY,\omega)\to  F\left(\det(T_{\bC}Y)^{\otimes 2} \right)
$$
where $F(\det(T_{\bC}Y^{\otimes 2}))$ is the unitary frame bundle of the Hermitian line bundle $\det(T_{\bC}Y)^{\otimes 2}$, and in particular it is 
a principal $U(1)$-bundle over $Y$.

Over $Y'$, we have Lagrangian Grassmannian bundles
$$
LGr(\Ver, \omega^\Ver) = F(\Ver_{\bC}, \omega^{\Ver})/O(n), \quad
LGr(\Hor, \omega^{\Hor}) = F(\Hor_{\bC}, \omega^\Hor)/O(1).
$$

\subsection{Quadratic complex volume forms and squared phase maps}\label{sec:quadratic}

\paragraph{\bf Assumptions.} In this section, we  assume $(Y, \omega, J)$ is a K\"ahler manifold of complex dimension $n+1 = \dim_{\bC} Y$, where $\omega$ is the symplectic structure, $J$ is the complex structure, and $v: Y \to \bC$ is a holomorphic function such that $\critv(v)$ is a finite set. 
Again, let $C =\bC\backslash \{ \critv(v)\}$ be the set of regular values of the superpotential $v$. For any $c\in \bC$, let $Y_c=v^{-1}(c)$, and let $Y'_c = v^{-1}(c)\cap Y'$.  For any $c \in C$, $Y_c=Y'_c$ is a closed complex submanifold of $Y$ of dimension $n$, and $\omega_c = \omega|_{Y_c} \in \Omega^2(Y_c)$ is a K\"ahler form on $Y_c$. Then $Y_C= v^{-1}(C) \to C$ is again a symplectic fibration in the sense of \cite{symp_intro}, and we further assume that there is a nowhere vanishing holomorphic $(n+1)$-form $\Omega$ on $Y$; $\Omega$ is also called a {\em holomorphic volume form}. In particular, $Y$ is a Calabi-Yau manifold. For any $c \in \critv(v)$, 
$Y'_c$ is a locally closed complex submanifold of $Y$ and a closed complex submanifold of $Y'$, 
and $\omega|_{Y'_c} \in \Omega^2(Y'_c)$ is a K\"{a}hler form on $Y'_c$. 

\begin{remark} One of the main references of this section is 
Section 15 of Seidel's book \cite{seidel}, where he considers
$(I_E, I_B)$-holomorphic maps $\pi:E\to B$ between exact symplectic manifolds with corners, i.e., 
$$
d\pi\circ I_E = I_B \circ d\pi
$$
where $I_E:TE\to TE$ (resp. $I_B:TB\to TB$) is an almost complex structure  on $E$ (resp. $B$) compatible with the exact symplectic form $\omega_E$ (resp. $\omega_B$) on $E$ (resp. $B$). {This allows for a more general treatment, dropping the K\"ahler assumption, see also \cite[\textsection 4.5]{ACLLb}. As the focus of this article is on applicability in computations, we show how a quadratic volume form is often found in practice. Many examples in homological mirror symmetry are Calabi-Yau. Therefore they admit a holomorphic volume form $\Omega$ defining a quadratic complex volume form, as follows. So our set-up is a special case of the general setting. }
\end{remark}

 The K\"ahler metric on $Y$ determines a Hermitian metric on the holomorphic line bundle $K_Y= \det(T^*_\bC Y)= \bigwedge^{n+1} T^*_\bC Y$. Let $|\Omega|$ denote the norm of the holomorphic volume form $\Omega$ with respect to the K\"ahler metric. Then $|\Omega|$ is a positive smooth function on $Y$ (we do not assume $\Omega$ is parallel, so $|\Omega|$ is not necessarily a constant), $\Omega/|\Omega|$ is a unitary frame of $\det(T^*_\bC Y)$, and 
$$
\Theta := (\Omega/|\Omega|)^{\otimes 2}
$$ 
is a unitary frame of $\det(T^*_\bC Y)^{\otimes 2}$ (called a {\em quadratic complex volume form} in \cite{seidel}).  
\begin{definition}
We define $\alpha_{\Theta}: LGr(TY,\omega)\to U(1)$ (called a {\em squared phase map} in \cite{seidel}) by 
\begin{equation}\label{eqn:alpha}
\alpha_{\Theta}(p, V) = \frac{\Omega(e_1,\ldots, e_{n+1})^2}{ |\Omega(e_1,\ldots, e_{n+1})|^2}
\end{equation}
where $p\in Y$, $V\cong \bR^{n+1}$ is a linear Lagrangian subspace of $(T_pY, \omega(p))$, and $(e_1,\ldots,e_{n+1})$ is {an $\bR$-basis} of $V$. 
\end{definition}
Note that $\alpha_{\Theta}(p,V)$ is well-defined because (i) $\Omega(e_1,\ldots, e_{n+1})\in \bC$ is non-zero since $V$ is Lagrangian, and (ii) if $(e'_1,\ldots, e'_{n+1})$ is {another $\bR$-basis} of $V$ then $\Omega(e_1',\ldots, e_{n+1}') =\lambda\Omega(e_1,\ldots, e_{n+1})$ for some nonzero $\lambda \in \bR$.

For any $ c \in \bC$, we define a nowhere vanishing holomorphic $n$-form $\Omega_c$ on $Y'_c$ as follows. 
Note that $dv|_{Y'}$ is a nowhere vanishing holomorphic 1-form on $Y'$. Given any point $p\in Y'_c$, there
exists an open neighborhood $U$ of $p$ in $Y'$ and local holomorphic coordinates $(z_1,\ldots, z_{n+1})$ on 
$U$ such that $z_{n+1}=v$. Then $(z_1,\ldots, z_n)$ are local holomorphic coordinates on 
$U\cap Y'_c$.  In terms of local holomorphic coordinates  $(z_1,\ldots, z_{n+1})$ on $U$, 
$$
\Omega = a(z_1,\ldots, z_{n+1}) dz_1\wedge \cdots \wedge dz_{n+1}
$$
where $a(z_1,\ldots, z_{n+1})$ is a nowhere vanishing holomorphic function on  $U$. In terms of local holomorphic coordinates $(z_1,\ldots, z_n)$ on $U\cap Y'_c$ (which is an open neighborhood of $p$ in $Y'_c$), 
$$
\Omega_c = a(z_1,\ldots, z_n, c) dz_1 \wedge \cdots \wedge dz_n.
$$
The holomorphic $n$-form $\Omega_c$ is known as the Poincar\'{e} residue of the meromorphic $(n+1)$-form $\frac{\Omega}{v-c}$ along the hypersurface $Y'_c$ in $Y'$, as in \cite[p 147]{gh}. In particular, $Y'_c$ is a Calabi-Yau manifold.  
\begin{remark}
    If $c\in C$ and $Y_c= Y'_c$ is compact  then $H^0(Y_c, K_{Y_c})=\bC \Omega_c$, i.e., any holomorphic $(n,0)$-form on $Y_c$ is a constant multiple of $\Omega_c$.
    \end{remark} 

Similarly to the total space, for each $c \in \bC,$ 
$$
\Theta_c:=(\Omega_c/|\Omega_c|)^{\otimes 2}
$$ 
is a unitary frame of $\det(T_{\bC}^*Y'_c)^{\otimes 2}$.  
We use $\Theta_c$ to define $\alpha_{\Theta_c}: LGr(TY'_c, \omega|_{Y'_c})\to U(1)$. 

For every $c\in \bC$, the inclusion map $\iota_c: Y'_c \to Y$ is holomorphic, and we have an isomorphism
$\iota_c^* \Ver_\bC \to T_{\bC}Y'_c$ of holomorphic vector bundles over $Y'_c$. There is a holomorphic section 
$\Omega^\Ver$ of $\det(\Ver_\bC^*)$ such that $\iota_c^*\Omega^\Ver =\Omega_c \in H^0\left(Y'_c, K_{Y'_c} \right)$ for all $c\in \bC$. Then
$$
\Theta^\Ver:= (\Omega^\Ver/|\Omega^\Ver|)^{\otimes 2}
$$
is a unitary frame of $\det(\Ver_\bC^*)^{\otimes 2}$ (called a {\em relative quadratic complex volume form} in \cite{seidel})
and $\iota_c^*\Theta^\Ver=\Theta_c$ for all $c\in \bC$. 
\begin{definition} We define $\alpha_{\Theta^\Ver}: LGr(\Ver, \omega^\Ver)  \to U(1)$ (called a {\em relative squared phase map} in \cite{seidel}) by
\begin{equation}\label{eqn:alpha-Vert}
\alpha_{\Theta^\Ver}(p, V) = \frac{\Omega^\Ver(e_1,\ldots, e_n)^2}{ |\Omega^\Ver(e_1,\ldots, e_n)|^2}
\end{equation}
where $p\in Y'$, $V\cong \bR^n$ is a linear Lagrangian subspace of $(\Ver_p, \omega^\Ver(p))$, and $(e_1,\ldots,e_n)$ is 
an ordered $\bR$-basis of $V$.
\end{definition}

Now we consider the horizontal subbundle. The holomorphic 1-form $dv$ on $Y$ restricts to a nowhere vanishing section of the complex line bundle $\Hor_\bC$ on $Y'$, and 
$$
\Theta^\Hor:= (dv/|dv|)^{\otimes 2}
$$
can be viewed as  a unitary frame of the Hermitian line bundle $\Hor_\bC$. 
\begin{definition}
We define $\alpha_{\Theta^\Hor}: LGr(\Hor, \omega^\Hor)\to U(1)$ by 
\begin{equation}\label{eqn:squared phase Hor}
\alpha_{\Theta^\Hor}(p, V) = \frac{dv(e)^2}{|dv(e)|^2}
\end{equation}
where $p\in Y'$, $V\cong \bR$ is a linear Lagrangian subspace of $(\Hor_p, \omega^\Hor(p))$, and $e$ is any non-zero vector in $V$. 
\end{definition}

On $\bC$, we have that $dz$ is a nowhere vanishing holomorphic 1-form, and $\Theta_\bC = dz^{\otimes 2}$ is a unitary frame of $(T_\bC^*\bC)^{\otimes 2}$ with respect to the standard K\"{a}hler form $\omega_\bC = \frac{i}{2} dz\wedge d\bar{z}$. We use $\Theta_\bC = dz^{\otimes 2}$ to define $\alpha_{\Theta_\bC}: LGr(T\bC, \omega_\bC)\to U(1)$.

In summary, the existence of $\Omega$ allows us to define a quadratic complex volume form, which when evaluated on a Lagrangian subspace at a point, gives the squared phase of the Lagrangian subspace. This allows us to define a notion of a grading on a Lagrangian, which is a lift of the squared phase function, to be defined in the next section.

\subsection{Graded Lagrangians}\label{sec:grade_Lag_paper5}
Given a Lagrangian submanifold $L$ in a symplectic manifold $(M,\omega_M)$, define
$$
s_L: L\to LGr(TM, \omega_M), \quad p\mapsto (p, T_p L). 
$$

\begin{definition}
Given a 4-tuple $(M, \omega_M, J_M, \Theta_M)$, where $(M,\omega_M)$ is a symplectic manifold, $J_M:TM\to TM$ is an $\omega_M$-compatible almost complex structure,
and $\Theta_M$ is a unitary frame of the Hermitian line bundle $\det(T_{\bC}^*Y)^{\otimes 2}$, we have a squared phase map
$\alpha_{\Theta_M}: LGr(TM, \omega_M)\to U(1)$. 
The {\em squared phase function of a Lagrangian $L$} in $M$ is defined to be
$$
\alpha_L:= \alpha_{\Theta_M}\circ s_L: L\to U(1).
$$
A {\em grading} of $L$ is a smooth map $\widetilde{\alpha}_L: L\to \bR$ that lifts $\alpha_L$, i.e.~such that $e^{2\pi i \widetilde{\alpha}_L} = \alpha_L : L\to U(1)$. 
\end{definition}
Suppose that  $L\subset M$ is a connected Lagrangian submanifold. Then there exists a grading of $L$ if and only if $\alpha_L: L\to U(1)$ is homotopic to a constant map, and in this case, given any $p\in L$ and any $\phi \in \bR$ such that $e^{2\pi i \phi}= \alpha_L(p)$, there exists a unique grading $\widetilde{\alpha}_L: L\to \bR$ such that $\widetilde{\alpha}_L(p)=\phi$.

In this paper we consider the following 4-tuples: 
$$
(Y, \omega, J, \Theta),\quad (Y_c, \omega_c, J_c, \Theta_c), \quad (\bC, \omega_\bC = dx\wedge dy, \bj, \Theta_\bC)
$$
where $c\in C$ and $J_c$ is the complex structure on $Y_c$.
\begin{definition}\label{def: alpha L}
    Given a Lagrangian $L$ in $Y$, $Y_c$, or $\bC$, respectively, 
we define $\alpha_L: L\to U(1)$ to be the composition $\alpha_\Theta \circ s_L$, $\alpha_{\Theta_c}\circ s_L$, or $\alpha_{\Theta_\bC}\circ s_L$, respectively. \end{definition}

\begin{definition}
Given a fibered Lagrangian $L\subset Y_C\subset Y$, define 
\begin{eqnarray*}
    & s^\Ver_L: L\to LGr(\Ver,\omega^\Ver), & p\mapsto \left(p, T_p L_{v(p)} \right),  \\
    & s^\Hor_L: L\to LGr(\Hor,\omega^\Hor), & p\mapsto \left(p, (dv_p|_{\Hor_p})^{-1}(T_{v(p)}v(L) \right),
\end{eqnarray*}
where $L_{v(p)} = L\cap Y_{v(p)}$ is a Lagrangian submanifold of $(Y_{v(p)}, \omega_{v(p)})$ and $v(L)$ is a Lagrangian submanifold of $(C, \omega_\bC|_C)$. We observe that
$$
T_p L_{v(p)} = T_p L\cap \Ver_p,\quad 
(dv_p|_{\Hor_p})^{-1}\left(T_{v(p)}v(L)\right)= T_p L\cap \Hor_p.
$$
We define 
$$
\alpha_L^\Ver:= \alpha_{\Theta^\Ver}\circ s_L^\Ver: L\to U(1), \quad
\alpha_L^\Hor:= \alpha_{\Theta^\Hor}\circ s_L^\Hor: L\to U(1).
$$
A {\em relative grading} of $L$ is a smooth map $\widetilde{\alpha}^\Ver_L:L\to \bR$ such that $e^{2\pi i \widetilde{\alpha}_L^\Ver} = \alpha^\Ver_L: L\to U(1)$. 
\end{definition}

These are sensible definitions because they agree with the squared phase functions of the fiber and base Lagrangians, see Equations \eqref{eqn:alpha-Ver} and \eqref{eqn:alpha-Hor} in the next Lemma. Furthermore, similar to how the (squared) determinant of a block diagonal matrix is the product of the (squared) determinants of the blocks, so does the squared phase function of a fibered $L$ split into a product of those of the fiber and base, see Equation \eqref{eqn:Ver-Hor}.

\begin{lemma}\label{lem:grading_split}
Let $L\subset Y_C\subset Y$ be a fibered Lagrangian. Then for any $p\in L$, 
\begin{equation}\label{eqn:alpha-Ver}
\alpha_L^\Ver(p) = \alpha_{L_{v(p)}}(p), 
\end{equation}
\begin{equation} \label{eqn:alpha-Hor}
\alpha_L^\Hor(p) =\alpha_{v(L)}(v(p)), 
\end{equation} 
\begin{equation} \label{eqn:Ver-Hor}
\alpha_L(p) = \alpha_L^\Ver(p) \alpha_L^\Hor(p). 
\end{equation}
\end{lemma}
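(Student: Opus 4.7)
The plan is to reduce the three identities to pointwise computations on the vertical/horizontal splitting of $T_pL$. Because $L$ is built from a fiber Lagrangian $\ell$ by parallel transport along $\gamma_L=v(L)$, the horizontal lift $(\dot\gamma_L)^\sharp|_p$ lies in $T_pL$, and together with $T_pL_{v(p)}\subset \Ver_p$ this yields
\[
T_pL = (T_pL\cap \Ver_p)\oplus (T_pL\cap \Hor_p)
\]
with $T_pL\cap \Ver_p = T_pL_{v(p)}$ of real dimension $n$, and $T_pL\cap \Hor_p = (dv_p|_{\Hor_p})^{-1}(T_{v(p)}v(L))$ of real dimension $1$. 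Thus the input subspaces for $s_L^\Ver(p)$ and $s_L^\Hor(p)$ are precisely the two summands; from here the proof is algebra with the forms $\Omega,\Omega^\Ver, dv, dz$.

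For \eqref{eqn:alpha-Ver}, the left-hand side is \eqref{eqn:alpha-Vert} evaluated on $T_pL_{v(p)}\subset \Ver_p$ using $\Omega^\Ver$, and the right-hand side is the analogue of \eqref{eqn:alpha} for the K\"ahler manifold $(Y_{v(p)},\omega_{v(p)},J_{v(p)})$ evaluated on $T_pL_{v(p)}\subset T_pY_{v(p)}$ using $\Omega_{v(p)}$. The identity $\iota_c^*\Omega^\Ver=\Omega_c$ recorded in the excerpt identifies these two forms under the canonical isomorphism $\Ver_p\cong T_pY_{v(p)}$, and the Hermitian norms also agree since both are restrictions of the K\"ahler metric on $Y$, so numerators and denominators coincide on any real basis of $T_pL_{v(p)}$. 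For \eqref{eqn:alpha-Hor}, I would pick any non-zero $e\in T_pL\cap \Hor_p$; then $dv_p(e)\in T_{v(p)}v(L)\subset T_{v(p)}\bC$ is non-zero, and under the tautological identification $T_\bC \bC\cong \bC$ via $dz$ one has $dz(dv_p(e))=dv_p(e)$. Both $\alpha_L^\Hor(p)$ (from \eqref{eqn:squared phase Hor}) and $\alpha_{v(L)}(v(p))$ (from the analogue of \eqref{eqn:alpha} for $(\bC,\omega_\bC,\bj,\Theta_\bC)$) therefore reduce to $dv_p(e)^2/|dv_p(e)|^2$.

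The substantive step is \eqref{eqn:Ver-Hor}, for which the key claim is the pointwise product formula
\[
\Omega(e_1,\ldots,e_{n+1}) = \Omega^\Ver(e_1,\ldots,e_n)\cdot dv(e_{n+1})
\]
whenever $(e_1,\ldots,e_n)$ is a real basis of $T_pL_{v(p)}$ and $e_{n+1}\in T_pL\cap \Hor_p$ is non-zero. Granted this, squaring and taking moduli splits the numerator and denominator of $\alpha_L(p)$ in \eqref{eqn:alpha} into the corresponding factors for $\alpha_L^\Ver(p)$ and $\alpha_L^\Hor(p)$, so \eqref{eqn:Ver-Hor} follows with no further input on Hermitian norms of determinant line bundles. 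To verify the product formula at $p$ I would choose local holomorphic coordinates $(z_1,\ldots,z_{n+1})$ with $z_{n+1}=v$, so that $dv=dz_{n+1}$ and $\Ver_p=\mathrm{span}_\bC\{\partial/\partial z_j|_p:j\le n\}$; writing $\Omega=a(z)\,dz_1\wedge\cdots\wedge dz_{n+1}$, the Poincar\'e residue formula from the excerpt represents $\Omega^\Ver$ at $p$ by $a(p)\,dz_1\wedge\cdots\wedge dz_n$ on $\Ver_p$. Since $dz_{n+1}(e_j)=0$ for $j\le n$, the alternating expansion of $\Omega(e_1,\ldots,e_{n+1})$ collapses to the single block of terms in which $z_{n+1}$ pairs with $e_{n+1}$, yielding the product.

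The main obstacle I expect is precisely this last local bookkeeping: selecting holomorphic coordinates that are simultaneously adapted to $v$ and to the subbundle $\Ver$, and matching $\Omega^\Ver$ at $p$ with the ``vertical factor'' of $\Omega$ via the Poincar\'e residue formula. Once the pointwise product is established, \eqref{eqn:Ver-Hor} is immediate, and \eqref{eqn:alpha-Ver} and \eqref{eqn:alpha-Hor} reduce to unwinding Definition \ref{def: alpha L} together with the squared-phase formulas \eqref{eqn:alpha}, \eqref{eqn:alpha-Vert}, \eqref{eqn:squared phase Hor}.
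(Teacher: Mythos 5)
Your proposal is correct and follows essentially the same route as the paper: reduce to the splitting $T_pL=(T_pL\cap\Ver_p)\oplus(T_pL\cap\Hor_p)$, check \eqref{eqn:alpha-Ver} and \eqref{eqn:alpha-Hor} directly from the definitions, and derive \eqref{eqn:Ver-Hor} from the pointwise factorization $\Omega(e_1,\ldots,e_{n+1})=\Omega_{v(p)}(e_1,\ldots,e_n)\,dv_p(e_{n+1})$, which is exactly the key identity the paper invokes. Your local-coordinate verification of that factorization (using $z_{n+1}=v$ and the Poincar\'e residue description of $\Omega_c$) just supplies detail the paper leaves as ``straightforward.''
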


\begin{proof} 
It is straightforward to check \eqref{eqn:alpha-Ver} and \eqref{eqn:alpha-Hor} from the definitions, by noting that we use the notations $\alpha_{L_{v(p)}}$ and $\alpha_{v(L)}$ in Definition \ref{def: alpha L} with  $L_{v(p)}$ a Lagrangian in $Y_{v(p)}$ and $v(L)$  a Lagrangian in $\bC$. We now check \eqref{eqn:Ver-Hor}.
Let $p\in L$ and let $c=v(p)\in C$.  Let $(e_1,\ldots, e_{n+1})$ be an ordered $\bR$-basis  of $T_p L$ such that $(e_1,\ldots,e_n)$ is an ordered $\bR$-basis
of $T_p L\cap \Ver_p = T_p L_c$ and $e_{n+1} \in  T_p L\cap \Hor_p = (dv_p|_{\Hor_p})^{-1}(T_{v(p)} v(L) )$. Equation \eqref{eqn:Ver-Hor} follows
from the following four equalities:
$$
\alpha_L(p) = \frac{ \Omega(p)(e_1,\ldots, e_{n+1})^2}{ |\Omega(p)(e_1,\ldots, e_{n+1})|^2}, 
$$
$$
\alpha_L^\Ver(p)= \alpha_{L_c}(p) = \frac{ \Omega_c(p)(e_1,\ldots, e_n)^2}{ |\Omega_c(p)(e_1,\ldots, e_n)|^2},  \quad
\alpha_L^\Hor(p)= \alpha_{v(L)}(c) = \frac{ dv_p(e_{n+1})^2 }{ |dv_p(e_{n+1})|^2 },
$$
$$
\Omega(p)(e_1,\ldots, e_{n+1}) =\Omega_c(p)(e_1,\ldots,e_n) dv_p(e_{n+1}).
$$
\end{proof}

Let $L\subset Y_C$ be a connected fibered Lagrangian. 
By \eqref{eqn:alpha-Ver}, if $\widetilde{\alpha}^\Ver_L: L\to \bR$ is a relative grading of $L$ then 
$\widetilde{\alpha}_L^\Ver|_{L_c}: L_c\to \bR$ is a grading of 
$L_c$ for any $c\in C$. Conversely, since the inclusion
$L_c\subset L$ is a deformation retract, given any $c\in C$
and any grading $\widetilde{\alpha}_{L_c}: L_c\to \bR$ of $L_c$, there exists a unique relative grading
$\widetilde{\alpha}_L^\Ver: L\to \bR$ such that $\widetilde{\alpha}^\Ver_L |_{L_c} = \widetilde{\alpha}_{L_c}$. {Thus there is a one-to-one correspondence between relative gradings of $L$ and gradings of $L_c$ for a given $c$, under restriction and inclusion.}

\begin{corollary}\label{cor:rel_to_tot_grade}
Given a relative grading $\widetilde{\alpha}^\Ver_L: L\to \bR$ of a fibered Lagrangian $L\subset Y_C\subset Y$ and a grading $\widetilde{\alpha}_{v(L)}:v(L)\to \bR$ of $v(L)\subset C \subset \bC$, define $\widetilde{\alpha}_L: L\to \bR$ by 
$$
\widetilde{\alpha}_L(p) = \widetilde{\alpha}_L^\Ver(p) +
\widetilde{\alpha}_{v(L)}(v(p)).  
$$
Then $\widetilde{\alpha}_L:L\to \bR$ is a grading of $L$.
\end{corollary}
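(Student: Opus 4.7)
The plan is to verify directly that the function $\widetilde{\alpha}_L$ defined in the statement is smooth and satisfies $e^{2\pi i \widetilde{\alpha}_L} = \alpha_L$ on $L$, so that it qualifies as a grading. The smoothness is automatic: $\widetilde{\alpha}_L^\Ver: L\to \bR$ is smooth by assumption, $v: Y \to \bC$ is smooth (in fact holomorphic), $\widetilde{\alpha}_{v(L)}: v(L)\to \bR$ is smooth by assumption, and $\widetilde{\alpha}_L$ is the sum of $\widetilde{\alpha}_L^\Ver$ with the pullback $v^*\widetilde{\alpha}_{v(L)}$. So the real content is the exponentiated identity.

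The key step will be to combine the definitions of relative grading and grading with the multiplicative decomposition of the squared phase function proved in Lemma \ref{lem:grading_split}. Concretely, at any $p\in L$ I compute
\begin{align*}
    e^{2\pi i \widetilde{\alpha}_L(p)}
    &= e^{2\pi i \widetilde{\alpha}_L^\Ver(p)} \cdot e^{2\pi i \widetilde{\alpha}_{v(L)}(v(p))} \\
    &= \alpha_L^\Ver(p)\cdot \alpha_{v(L)}(v(p)),
\end{align*}
where the first line is the functional equation of the exponential and the second line uses, in order, the defining property of the relative grading $\widetilde{\alpha}_L^\Ver$ and the defining property of the grading $\widetilde{\alpha}_{v(L)}$ on the base curve.

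It then remains to identify this product with $\alpha_L(p)$. By \eqref{eqn:alpha-Hor} of Lemma \ref{lem:grading_split} we have $\alpha_{v(L)}(v(p)) = \alpha_L^\Hor(p)$, and by \eqref{eqn:Ver-Hor} we have $\alpha_L(p) = \alpha_L^\Ver(p)\cdot \alpha_L^\Hor(p)$. Substituting gives $e^{2\pi i \widetilde{\alpha}_L(p)} = \alpha_L(p)$, which is exactly what is needed.

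I do not expect a serious obstacle in this corollary: all the substantive work (the block-diagonal reduction of structure groups, the multiplicativity of $\Omega$ on vertical plus horizontal bases, and the corresponding factorization $\alpha_L = \alpha_L^\Ver\cdot \alpha_L^\Hor$) has already been packaged into Lemma \ref{lem:grading_split}. The potentially delicate point, which is merely notational, is to keep track that $\widetilde{\alpha}_{v(L)}$ is defined on the base curve $v(L)\subset \bC$ while $\widetilde{\alpha}_L$ and $\widetilde{\alpha}_L^\Ver$ live on $L$, so the pullback by $v$ is what makes the sum in the statement well-defined on $L$; this is exactly the content of the equality $\alpha_L^\Hor = v^*\alpha_{v(L)}$ from \eqref{eqn:alpha-Hor}.
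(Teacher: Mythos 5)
Your argument is correct and is essentially identical to the paper's own proof: both exponentiate the defining sum, use the defining properties of the relative grading and the base grading, and then invoke \eqref{eqn:alpha-Hor} and \eqref{eqn:Ver-Hor} from Lemma \ref{lem:grading_split} to identify the product with $\alpha_L(p)$. The extra remarks on smoothness and on the pullback by $v$ are harmless elaborations of the same computation.
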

\begin{proof}
$$
e^{2\pi i\widetilde{\alpha}_L(p)}
= e^{2\pi i \widetilde{\alpha}_L^\Ver(p)}
e^{2\pi i \widetilde{\alpha}_{v(L)}(v(p))}
= \alpha^\Ver_L(p) \alpha_{v(L)}(v(p))
=\alpha_L(p)
$$
where the last equality follows from 
\eqref{eqn:alpha-Hor} and \eqref{eqn:Ver-Hor}.
\end{proof}

\begin{example}[Example of a graded Lagrangian] In order to use Lemma \ref{lem:grading_split} in practice, we need to find a frame $e_1,\ldots,e_n$ for the fiber Lagrangian tangent space. It may be that the coordinates describing the Lagrangian are related by a complicated transform to the coordinates in which $\Omega$ is written (for example action-angle coordinates which trivialize $\omega$ versus holomorphic coordinates which trivialize $J$, related by the Legendre transform). The beauty of this Lemma is that we can still find $\Omega(e_1,\ldots,e_{n+1})$ and hence the squared phase map Equation \eqref{eqn:alpha} in many cases with the help of this fiber and base splitting.

One case is when the tangent bundle of a fiber $T_pY_c=Y_c \times \bR^{2n}$ is trivial, the fibered Lagrangian $L$ has a trivializable tangent bundle, and $\alpha_{L_c}$ is homotopic to a constant map. Then
$$
\Ver|_{Y_c}=Y_c \times \bC^n \implies F(\Ver_\bC|_{Y_c},\omega^\Ver|_{Y_c}) = Y_c \times U(n) \ni (p,[e_1 \mid \ldots \mid e_n]=I_n)
$$
where $\{e_j=(0,\ldots,0,1,0,\ldots,0)\}_{j=1,\ldots,n} \subset \bC^n$ is the frame of the standard basis vectors. (Any other unitary frame corresponds to a unitary matrix.) So elements of
$$
LGr(\Ver|_{Y_c},\omega^\Ver|_{Y_c})=F(\Ver_\bC|_{Y_c},\omega^\Ver|_{Y_c})/O(n)=Y_c \times U(n)/O(n)
$$
are pairs $(p,aO(n))$ of a point $p\in Y_c$ in the fiber and coset $aO(n)$ for $a \in U(n)$. For a symplectic Landau-Ginzburg model, the structure group for the unitary
frame bundle $F(T_{\bC}Y',\omega|_{Y'})$ can be reduced to the subgroup $U(n)\times U(1)\subset U(n+1)$ (see Section \ref{sec:unitary-frame}):
$$
F(T_{\bC}Y', \omega|_{Y'}) = 
F \left( (\omega^\Ver, J^\Ver)\oplus (\omega^\Hor,  J^\Hor) \right) \times_{U(n)\times U(1)} 
U(n+1).
$$
Therefore in this case, $\Theta$ takes the squared determinant of a matrix under this identification of an element in $LGr(\Ver_\bC|_{Y_c},\omega^\Ver|_{Y_c})$ with a representative in $U(n)\times U(1)$. That is
$$
\alpha_{L}: L \xrightarrow[]{s_{L}} LGr(TY,\omega) \xrightarrow[]{\alpha_\Theta} U(1)
$$
is defined in the fiber over $c$ to be
$$
\alpha_L|_{L_c}=\Theta \circ \phi_Y \circ s_{L}|_{L_c}: L_c \xrightarrow[]{s_L} LGr(\Ver_\bC|_{Y_c},\omega^\Ver|_{Y_c})\times_{Y_c} LGr(\Hor_\bC|_{Y_c}, \omega^\Hor|_{Y_c}) \xrightarrow[]{\phi_Y} F(\det(T_\bC Y)^{\otimes 2}) \xrightarrow[]{\Theta}  U(1)
$$
$$
p \xmapsto[]{s_L} (p,T_p L_{v(p)}\oplus (dv_p|_{\Hor_p})^{-1}(T_{v(p)}v(L))) \xmapsto[]{\phi_Y}\left(p,  \det\left( \begin{matrix} a & 0 \\ 0 & dv_p(e)/|dv_p(e)| \end{matrix} \right)^2 \right) \xmapsto[]{\Theta} \det(a)^2 \alpha_{v(L)}(c) 
$$
for unitary matrix $a=a(p)$ whose columns form a frame which trivializes the fiber Lagrangian tangent bundle $T_pL_{v(p)}$ such that $\det(a^2)=\alpha_{L_c}(p)$, and $e=e(p)$ is a nonzero vector in $ \Hor_p\cap T_pL$. Note that we know a lift $\tilde{\alpha}_L$ exists so we can grade the fibered Lagrangian $L$; it is homotopy equivalent to fiber Lagrangian $L_c$ by a deformation retract and $\alpha_{L_c}$ is homotopic to a constant map by assumption, so $\alpha_L:L \to U(1)$ is homotopic to a constant map. Thus there exists an $\widetilde \alpha_L:L \to \bR$ such that $\alpha_{L}=e^{2\pi i \tilde \alpha_{L}}$. 
\end{example}

\subsection{$\bZ$-grading on morphisms}\label{sec:grade_mors_paper5}

Let $\widetilde{L_0}=(L_0,\widetilde{\alpha}_{L_0})$ and $\widetilde{L_1}=(L_1,\widetilde{\alpha}_{L_1})$ be two graded and fibered Lagrangians in $Y_C$. Let $L_{j,c}:=L_j \cap Y_c$ and $p\in L_0 \pitchfork L_1$. There is a linear symplectomorphism 
$$
A=A^\Ver \oplus A^\Hor: T_pY'=\Ver_p \oplus \Hor_p \to \bC^{n+1}
$$
$$
\quad A^\Ver(T_pL_{0,v(p)})=\bR^n, \quad A^\Ver(T_p L_{1,v(p)})=i\bR^n
$$
$$
A^\Hor(dv_p|_{\Hor_p})^{-1}(T_{v(p)}v(L_0) ) = \bR,\quad  A^\Hor(dv_p|_{\Hor_p})^{-1}(T_{v(p)}v(L_1) ) = i\bR.
$$
Then the \emph{canonical short path} is

$$
\la=\la^\Ver\oplus \la^\Hor: [0,1] \to LGr(\Ver_p,\omega^\Ver) \times LGr(\Hor_p,\omega^\Hor) \subset LGr(T_p Y', \omega), \,
$$
$$t \mapsto (A^\Ver\oplus A^\Hor)^{-1}(e^{-\frac{i \pi t}{2}}\bR^{n+1}).
$$
\begin{remark} 
Under the complex linear and symplectic isomorphism $A:T_pY' \to \bC^{n+1}$, the  inclusion 
$$
 LGr(\Ver_p,\omega^\Ver) \times LGr(\Hor_p,\omega^\Hor) \subset LGr(T_p Y', \omega)
 $$
corresponds to
$$
U(n)/O(n) \times U(1)/O(1) = \left(U(n)\times U(1)\right)/\left(O(n)\times O(1)\right) \subset U(n+1)/O(n+1).
$$
\end{remark} 
We define $\alpha_\la:=\alpha_\Theta \circ \la:[0,1]\to U(1)$ and define the lift $\widetilde{\alpha}_\la$ so that  $\widetilde{\alpha}_\la(0)=\widetilde{\alpha}_{L_0}(p)$. In particular, for $\alpha^\Ver_\la := \alpha_{\Theta^\Ver} \circ \la^\Ver$ and $\alpha^\Hor_\la:= \alpha_{\Theta^\Hor} \circ \la^\Hor$ then $\alpha_\la=\alpha^\Ver_\la \alpha^\Hor_\la$ similar to Equation \eqref{eqn:Ver-Hor}. Likewise, define lifts $\widetilde{\alpha}_{\la}^\Ver$ and $\widetilde{\alpha}_{\la}^\Hor$ so that $\widetilde{\alpha}_{\la}^\Ver(0)=\widetilde{\alpha}^\Ver_{L_0}(p)$ and $\widetilde{\alpha}_{\la}^\Hor(0)=\widetilde{\alpha}_{v(L_0)}(v(p))$. The definition of the degree of an intersection point $p\in L_0 \pitchfork L_1$ is then (see \cite[p 7]{ACLLb} for more details) 
\begin{equation}\label{eq:tot_degree}
\deg(\widetilde{L}_0, \widetilde{L}_1;p)=(\widetilde{\alpha}_{L_1}(p)-\cancel{\widetilde{\alpha}_{L_0}(p)})-(\widetilde{\alpha}_\la(1)-\cancel{\widetilde{\alpha}_\la(0)})=[\widetilde{\alpha}_{L_{1}}^\Ver(p) +
\widetilde{\alpha}_{v(L_1)}(v(p))]-[\widetilde{\alpha}_\la^\Ver(1) +
\widetilde{\alpha}_{\la}^\Hor(1)].    
\end{equation}

\begin{theorem}\label{cor_Maslov_paper5}
    Consider two fibered Lagrangians $L_0, L_1 \subset Y_C$ and $p\in L_{0,c} \pitchfork L_{1, c}$, so $c=v(p)$. Assume the fiber Lagrangians $L_{j,c}$ admit gradings for $j=0,1$. Then $L_0, L_1$ admit gradings $\widetilde{L}_0$, $\widetilde{L}_1$ such that
    $$
    \deg(\widetilde{L_0}, \widetilde{L_1};p)=\deg_{Y_c}(\widetilde{L}_{0,c}, \widetilde{L}_{1,c}; p)+\deg_{\bC}(\widetilde{v(L_0)},\widetilde{v(L_1)}; c).
    $$
\end{theorem}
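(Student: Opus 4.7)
The plan is to build the graded Lagrangians $\widetilde L_0$ and $\widetilde L_1$ out of the hypothesized fiber gradings, and then to recognize each summand in Equation \eqref{eq:tot_degree} as a degree computed intrinsically in the fiber $Y_c$ or in the base $\bC$.

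First, I would promote the fiber gradings to the total space. By the discussion preceding Corollary \ref{cor:rel_to_tot_grade}, the inclusion $L_{j,c}\hookrightarrow L_j$ is a deformation retract, so each grading $\widetilde\alpha_{L_{j,c}}$ extends uniquely to a relative grading $\widetilde\alpha^\Ver_{L_j}: L_j\to \bR$. The base curve $v(L_j)$ is the image of the smooth embedding $\gamma_{L_j}:\bR\to C$, hence contractible, so $\alpha_{v(L_j)}$ is null-homotopic and admits a grading $\widetilde\alpha_{v(L_j)}$. Corollary \ref{cor:rel_to_tot_grade} then assembles $\widetilde\alpha_{L_j}:=\widetilde\alpha^\Ver_{L_j}+\widetilde\alpha_{v(L_j)}\circ v$ as a grading of $L_j$, producing the desired $\widetilde L_j$.

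Next, I would rearrange \eqref{eq:tot_degree} using the short path splitting $\la=\la^\Ver\oplus\la^\Hor$. By Lemma \ref{lem:grading_split} one has $\alpha_\la=\alpha^\Ver_\la\,\alpha^\Hor_\la$, and the conventions $\widetilde\alpha^\Ver_\la(0)=\widetilde\alpha^\Ver_{L_0}(p)$, $\widetilde\alpha^\Hor_\la(0)=\widetilde\alpha_{v(L_0)}(v(p))$ make $\widetilde\alpha_\la=\widetilde\alpha^\Ver_\la+\widetilde\alpha^\Hor_\la$ compatible with $\widetilde\alpha_{L_0}(p)$ from Step 1. Formula \eqref{eq:tot_degree} then collapses to
\begin{equation*}
\deg(\widetilde L_0,\widetilde L_1;p)=\bigl[\widetilde\alpha^\Ver_{L_1}(p)-\widetilde\alpha^\Ver_\la(1)\bigr]+\bigl[\widetilde\alpha_{v(L_1)}(v(p))-\widetilde\alpha^\Hor_\la(1)\bigr].
\end{equation*}
I would then identify the two brackets with $\deg_{Y_c}(\widetilde L_{0,c},\widetilde L_{1,c};p)$ and $\deg_\bC(\widetilde{v(L_0)},\widetilde{v(L_1)};c)$ respectively. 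For the vertical bracket, under $\iota_c^*\Ver_\bC\cong T_\bC Y_c$ together with $\iota_c^*\Theta^\Ver=\Theta_c$, the map $A^\Ver$ becomes a standard positioning symplectomorphism in $T_pY_c$ sending $T_pL_{0,c}\mapsto \bR^n$ and $T_pL_{1,c}\mapsto i\bR^n$, so $\la^\Ver$ is exactly the canonical short path defining $\deg_{Y_c}$. For the horizontal one, $dv_p|_{\Hor_p}$ identifies $\la^\Hor$ with the canonical short path in $LGr(T_c\bC,\omega_\bC)$, and the identity $\alpha_{\Theta^\Hor}(p,V)=\alpha_{\Theta_\bC}(c,dv_p(V))$ is immediate from $\Theta^\Hor=(dv/|dv|)^{\otimes 2}$ and $\Theta_\bC=dz^{\otimes 2}$.

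The delicate point will be this last identification, at the level of lifts rather than of squared phases: one must verify that the initial-value conventions chosen for $\widetilde\alpha^\Ver_\la$ and $\widetilde\alpha^\Hor_\la$ agree with the conventions used intrinsically in $LGr(T_pY_c,\omega_c)$ and $LGr(T_c\bC,\omega_\bC)$, so that the two lifts add correctly at $t=0$ and remain compatible for $t\in(0,1]$. This is ultimately a matter of tracking the block-diagonal reduction of the structure group from \textsection \ref{sec:fr_bdle_paper5} through the squared-determinant formula for $\alpha_\Theta$ factor by factor; no genuinely new geometric input is needed once this bookkeeping is in place.
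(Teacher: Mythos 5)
Your proposal is correct and follows essentially the same route as the paper: grade each $L_j$ via Corollary \ref{cor:rel_to_tot_grade} from the given fiber gradings and a chosen base grading, then split Equation \eqref{eq:tot_degree} into vertical and horizontal brackets using $\la=\la^\Ver\oplus\la^\Hor$ and identify them with $\deg_{Y_c}$ and $\deg_\bC$. The paper asserts that last identification directly, whereas you spell out the bookkeeping (the role of $\iota_c^*\Theta^\Ver=\Theta_c$, $dv_p|_{\Hor_p}$, and the initial-value conventions for the lifts), which is a reasonable elaboration rather than a different argument.
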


\begin{proof} 
By assumption we have gradings $\widetilde{L}_{j,c}$ for $j=0,1$. Choose a grading $\widetilde{v(L_j)}$ for $j=0,1$ as well. Then grade $L_j$ as in Corollary \ref{cor:rel_to_tot_grade}. By Equation \eqref{eq:tot_degree}, 
\begin{equation}
    \begin{aligned}
        \deg(\widetilde{L}_0, \widetilde{L}_1;p)&=[\widetilde{\alpha}_{L_1}^\Ver(p) +
\widetilde{\alpha}_{v(L_1)}(v(p))]-[\widetilde{\alpha}_\la^\Ver(1) +
\widetilde{\alpha}_{\la}^\Hor(1)]\\
&=[\widetilde{\alpha}_{L_1}^\Ver(p)-\widetilde{\alpha}_\la^\Ver(1)] +
[\widetilde{\alpha}_{v(L_1)}(v(p))-
\widetilde{\alpha}_{\la}^\Hor(1)]\\
&=\deg_{Y_c}(\widetilde{L}_{0,c}, \widetilde{L}_{1,c}; p)+\deg_{\bC}(\widetilde{v(L_0)},\widetilde{v(L_1)}; c).
    \end{aligned}
\end{equation}
\end{proof}
    \begin{lemma}\label{bigon_Maslov_paper5}
    Consider a bigon $u\in \cD(Y, (L_0,L_1), \{p_0=p_-,p_1=p_+\})$ with boundary on two graded and fibered Lagrangians $L_0,L_1 \subset Y_C$. The bigon may contain isolated critical points of $v$ on its interior. Let $c_\pm=\gamma_{L_j}(t_\pm)$ denote the two intersection points in the base as in Figure \ref{fig:deg_pt_base} and $p_\pm$ the intersection points in the total space above them. Then 
    $$
\deg(\widetilde{L_0}, \widetilde{L_1}; p_+) - \deg(\widetilde{L_0}, \widetilde{L_1}; p_-) =\deg(\widetilde{L_{0,c_+}}, \widetilde{ L_{1,c_+}}; p_+)-\deg(\Phi(\widetilde{L_{0,c_+}}), \widetilde{ L_{1,c_+}}; p'_-)+1
$$
where $\Phi$ is the monodromy map $Y_{c_+} \to Y_{c_+}$, which is a parallel transport around a loop that goes once around the critical values in the base enclosed by the bigon, and $p'_-:=(\Phi_{\gamma_{L_1}(t_+) \to \gamma_{L_1}(t_-)})^{-1}(p_-)$.  
\end{lemma}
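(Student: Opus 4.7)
The plan is to apply Theorem~\ref{cor_Maslov_paper5} at both corners to split each total degree into a fiber and a base contribution, and then to handle the two resulting differences separately. First I would grade $L_0$ and $L_1$ via Corollary~\ref{cor:rel_to_tot_grade} starting from the assumed fiber gradings and any choice of gradings on $v(L_0)$ and $v(L_1)$. Subtracting the split expressions at $p_+$ and $p_-$ reduces the lemma to the fiber identity
\begin{equation*}
\deg_{Y_{c_-}}(\widetilde{L}_{0,c_-}, \widetilde{L}_{1,c_-}; p_-) = \deg_{Y_{c_+}}(\Phi(\widetilde{L_{0,c_+}}), \widetilde{L_{1,c_+}}; p'_-)
\end{equation*}
together with the base identity
\begin{equation*}
\deg_\bC(\widetilde{v(L_0)}, \widetilde{v(L_1)}; c_+) - \deg_\bC(\widetilde{v(L_0)}, \widetilde{v(L_1)}; c_-) = 1.
\end{equation*}

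For the fiber identity I would apply the symplectomorphism $\Psi := (\Phi_{\gamma_{L_1}(t_+) \to \gamma_{L_1}(t_-)})^{-1} \colon Y_{c_-} \to Y_{c_+}$. By definition of parallel transport $\Psi(L_{1,c_-}) = L_{1,c_+}$, while $\Psi$ composed with parallel transport along $\gamma_{L_0}$ from $c_+$ to $c_-$ is, by construction, exactly the monodromy $\Phi$, so $\Psi(L_{0,c_-}) = \Phi(L_{0,c_+})$ and $\Psi(p_-) = p'_-$. Because the relative grading $\widetilde{\alpha}^\Ver_{L_j}$ restricted to fibers varies continuously along $v(L_j)$, it automatically agrees with parallel transport of the fiber gradings, so $\Psi$ intertwines the graded data on the two sides. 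Naturality of the intersection-point degree under graded symplectomorphism then yields the fiber identity.

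For the base identity I would argue directly in the flat plane $\bC$. The two arcs $v(L_0)|_{[t_+,t_-]}$ and $v(L_1)|_{[t_+,t_-]}$ bound a topological bigon $D\subset \bC$ with corners $c_+$ and $c_-$; any critical values of $v$ enclosed by $v\circ u$ lie in the interior of $D$ but are irrelevant here, since the base grading only records the tangent angle of each arc. At each corner, the canonical short path $\la^\Hor$ of Section~\ref{sec:grade_mors_paper5} rotates $T v(L_0)$ to $T v(L_1)$ by $-\pi/2$. Using that the total tangent rotation around $\partial D$ equals $2\pi$ (Gauss--Bonnet on the flat disc), a direct comparison of the two corner lifts produces $\deg_\bC(c_+) - \deg_\bC(c_-) = +1$, which is the familiar statement that a bigon in a two-dimensional symplectic manifold has Maslov index one.

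The main obstacle is the sign and orientation bookkeeping: the direction in which $\gamma_{L_j}$ is traversed, the induced orientation of $\partial u$, the sign of the rotation in $\la^\Hor$, and the direction of monodromy along $\gamma_{L_0}\cdot \gamma_{L_1}^{-1}$ must all be consistent with one another. Once these are aligned with the conventions of Section~\ref{sec:grade_mors_paper5} and \cite{ACLLb}, both the fiber identity and the base identity follow immediately, and combining them with the splitting from Theorem~\ref{cor_Maslov_paper5} gives the claim.
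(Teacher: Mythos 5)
Your proposal follows essentially the same route as the paper: split both degrees via Theorem \ref{cor_Maslov_paper5}, transport the $c_-$ fiber data to $Y_{c_+}$ by $(\Phi_{\gamma_{L_1}(t_+)\to\gamma_{L_1}(t_-)})^{-1}$ so the fiber term at $p_-$ becomes $\deg(\Phi(\widetilde{L_{0,c_+}}),\widetilde{L_{1,c_+}};p'_-)$, and extract the $+1$ from the base bigon. The only real difference is in the base step, which the paper makes concrete by choosing the grading $\phi_j^\Hor(t)=\frac{1}{\pi}\arg\dot\gamma_{L_j}(t)$ and writing the two horizontal canonical short paths explicitly (rotations by $-\pi(1-(\phi^\Hor_{1,+}-\phi^\Hor_{0,+}))$ at $c_+$ and $-\pi(\phi^\Hor_{0,-}-\phi^\Hor_{1,-})$ at $c_-$, not a fixed $-\pi/2$ as you state), yielding base degrees $1$ and $0$ respectively --- the same $+1$ your Gauss--Bonnet count produces.
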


{We make a remark of interest on the relation to Skein theory in Figure \ref{fig:deg_pt_base_Skein} and Remark \ref{rmk:skein}.}

\begin{remark}\label{rmk:skein} Without the assumption $2c_1(Y)=0$, intersection points still always have a $\bZ/2$-grading that matches the Skein relations; for $c_\pm \in \Hom^*(v(L_0),v(L_1))$, if the input Lagrangian goes above and the output below, then the $-$ crossing in the sense of Skein theory corresponds to the degree 0 intersection point $c_-$ and the $+$ crossing to the degree 1 intersection point $c_+$.  
\end{remark}

\begin{figure}[h]
\centering
\begin{subfigure}[b]{0.45\textwidth}
\centering
\includegraphics[scale=0.25]{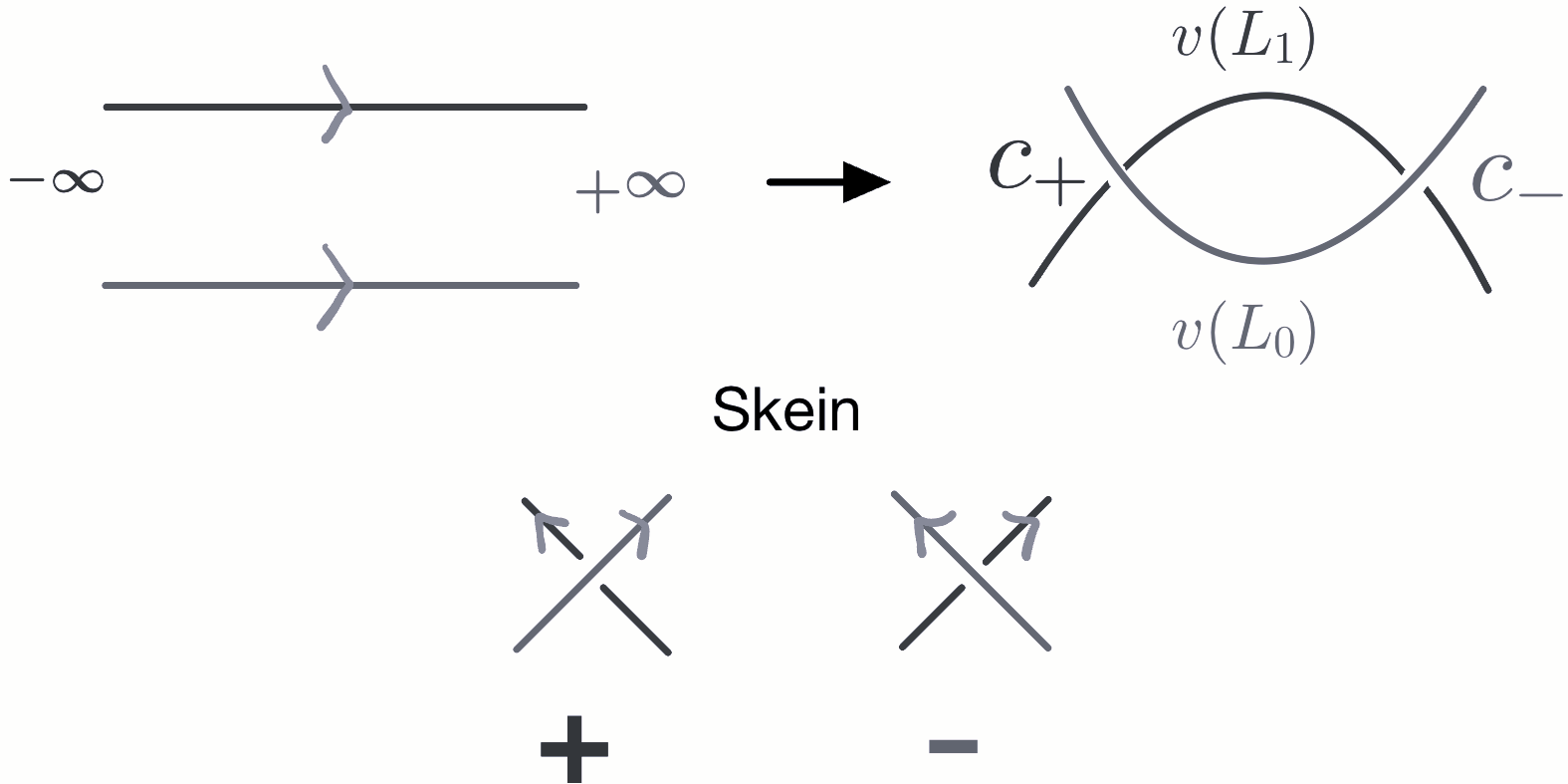}
\subcaption{The equivalence of a choice of $\bZ/2$-grading with the convention in Skein relations.}
\label{fig:deg_pt_base_Skein}
\end{subfigure}
\hfill
\begin{subfigure}[b]{0.4\textwidth}
    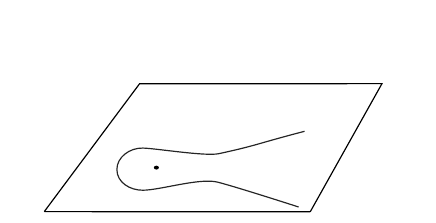
    \subcaption{The differential counts bigons over that in the base. $\Phi$ is the monodromy.}
    \label{fig:deg_pt_base}
\end{subfigure}
\caption{Grading of points in a bigon}
\end{figure}

\begin{proof}[Proof of Lemma \ref{bigon_Maslov_paper5}]
Suppose the $v(L_j)$ are graded by $\phi_j^\Hor(t):= \frac{1}{\pi}\arg(\dot\gamma_{L_j}(t))\in (-1/2,1/2)$. Let $\phi_{j,\pm}^\Hor:=\phi_j^\Hor(t_\pm)$ where $\gamma_{L_j}(t_\pm)=c_\pm$. In other words, these are the angles of the tangent vectors to $v(L_j)$ at the two intersection points $c_\pm$. The canonical short paths in the total space are
\[
\lambda(t) =\begin {cases}
\la_+(t)=\lambda^\Ver_+(t)\oplus (dv_p)^{-1}( e^{-\pi i  (1-(\phi^\Hor_{1,+}-\phi^\Hor_{0,+})) t} \dot \gamma_{L_0}(t_+)) & \text{if } p=p_+ \in Y_{c_+},\\
\\
\la_-(t)= \lambda^\Ver_-(t)\oplus  (dv_p)^{-1}(e^{-\pi i  (\phi^\Hor_{0,-}-\phi^\Hor_{1,-}) t} \dot \gamma_{L_0}(t_-)) & \text{if } p=p_- \in Y_{c_-}
\end{cases}
\]
where $\lambda^\Ver_{\pm}(t)$ are the canonical short paths between fiber Lagrangians over $c_{\pm}$. Note that in $Y_{c_-}$ we have an identification of Floer complexes 
\begin{equation}
CF(\Phi_{\gamma_{L_0}(t_+) \to \gamma_{L_0}(t_-)}(L_{0,c_+}),\Phi_{\gamma_{L_1}(t_+) \to \gamma_{L_1}(t_-)}(L_{1,c_+}))  
\cong CF(\Phi(L_{0,c_+}),L_{1,c_+})
\end{equation}
where $(\Phi_{\gamma_{L_1}(t_+) \to \gamma_{L_1}(t_-)})^{-1}\Phi_{\gamma_{L_0}(t_+) \to \gamma_{L_0}(t_-)}=\Phi$ is the monodromy, by applying the diffeomorphism\newline $(\Phi_{\gamma_{L_1}(t_+) \to \gamma_{L_1}(t_-)})^{-1}$ to both Lagrangians branes on the left Floer complex. Thus by Theorem \ref{cor_Maslov_paper5}, where  $\widetilde{\alpha}_{\la_\pm}^\Hor(0)=\widetilde{\alpha}_{v(L_0)}(c_\pm)=\phi^\Hor_{0,\pm}$ so it cancels out as in Equation \eqref{eq:tot_degree},
\begin{equation}
    \begin{aligned}
\deg(\widetilde{L_0}, \widetilde{L_1}; p_+)& =\deg(\widetilde{L_{0,c_+}}, \widetilde{ L_{1,c_+}}; p_+)+\deg_{\bC}(\widetilde{v(L_0)},\widetilde{v(L_1)}; c_+)\\
& =\deg(\widetilde{L_{0,c_+}}, \widetilde{ L_{1,c_+}}; p_+)+(\phi^\Hor_{1,+}-\phi^\Hor_{0,+})-(-(1-(\phi^\Hor_{1,+}-\phi^\Hor_{0,+})))\\
&=\deg(\widetilde{L_{0,c_+}}, \widetilde{ L_{1,c_+}}; p_+)+1\\
\deg(\widetilde{L_0}, \widetilde{L_1};p'_-) & =\deg({\Phi(\widetilde{L_{0,c_+}})}, \widetilde{ L_{1,c_+}}; p'_-)+(\phi^\Hor_{1,-}-\phi^\Hor_{0,-})-(-(\phi^\Hor_{0,-}-\phi^\Hor_{1,-}))\\
&= \deg({\Phi(\widetilde{L_{0,c_+}})}, \widetilde{ L_{1,c_+}}; p'_-).        
    \end{aligned}
\end{equation}
 Although different graded lifts exist $\widetilde{\alpha_{v(L)}}:v(L) \to \bR$ for $L=L_0$ and $L=L_1$, the difference between degrees of $p_+$ and $p_-$ stays the same.
\end{proof}

\begin{remark} The bigon may pass through  singular fibers, but we only consider the Maslov index of a loop of matrices at an intersection point away from the singular fiber, where we use a decomposition into base and fiber with the globally defined holomorphic $(n+1)$-form that splits into base and fiber terms, above.
\end{remark}

\begin{remark}
Note that the definition of a grading of an intersection point is consistent with the Morse theory interpretation of $J$-holomorphic strips. Lagrangian Floer cohomology is Morse cohomology for an action functional measuring negative area \cite[p 3]{fuk_intro}. Critical points of the functional are intersection points $L_0 \cap L_1$. Counting flow lines agrees with the Floer differential counting strips that flow with increasing signed $(\dd_s, \dd_t)$ area for coordinates $(s,t)\in \bR \times [0,1]$ on the strip in Figure \ref{fig:deg_pt_base_Skein}. Therefore, since we are considering \underline{co}homology instead of Morse homology, the differential goes opposite to the flow direction, and the intersection points are graded accordingly to increase degree by 1 from $p_-$ to $p_+$. This matches the result in Lemma \ref{bigon_Maslov_paper5}. (The flow direction, grading of a Lagrangian, and orientation of a Lagrangian are not directly related.) 
\end{remark}

\bibliographystyle{amsalpha}
\bibliography{glob_gen2_hms}

\end{document}